\title[STOLARSKY'S INVARIANCE PRINCIPLE FOR PROJECTIVE SPACES]{STOLARSKY'S INVARIANCE PRINCIPLE FOR PROJECTIVE SPACES}
\author[M.M. SKRIGANOV]{M.M. SKRIGANOV}
\address{St. Petersburg Department of the Steklov Mathematical Institute 
of the Russian Academy of Sciences, 
27, Fontanka, St.Petersburg 191023, Russia}
\email{mmskrig@gmail.com}
\keywords{Projective spaces, geometry of distances, discrepancies, spherical functions, Jacobi polynomials}
\subjclass[2010]{11K38, 22F30, 52C99}
\numberwithin{equation}{section}
\newtheorem{theorem}{Theorem}[section]
\newtheorem{lemma}{Lemma}[section]
\newtheorem{proposition}{Proposition}[section]
\newtheorem{corollary}{Corollary}[section]
\theoremstyle{remark}
\theoremstyle{remark}
\newtheorem{definition}{Definition}[section]
\def\dd{\mathrm{d}}
\def\Cc{\mathbb{C}}
\def\Ff{\mathbb{F}}
\def\Hh{\mathbb{H}}
\def\Oo{\mathbb{O}}
\def\Rr{\mathbb{R}}
\def\Zz{\mathbb{Z}}
\def\BBB{\mathcal{B}}
\def\CCC{\mathcal{C}}
\def\DDD{\mathcal{D}}
\def\EEE{\mathcal{E}}
\def\III{\mathcal{I}}
\def\MMM{\mathcal{M}}
\DeclareMathOperator{\diam}{diam}
\renewcommand{\le}{\leqslant}
\renewcommand{\ge}{\geqslant}
\numberwithin{equation}{section}
\DeclareMathOperator{\Spin}{\mathrm{Spin}}
\DeclareMathOperator{\RE}{\mathrm{Re}}
\DeclareMathOperator{\Tr}{\mathrm{Tr}}
\def\M{\mathcal M}
\def\F{\mathbb F}
\def\bR{\mathbb R}
\def\la{\lambda}
\def\D{\mathcal D}
\def\E{\mathcal E}
\def\HH{\mathcal H}
\def\bO{\mathbb O}
\def\bR{\mathbb R}
\def\bZ{\mathbb Z}
\numberwithin{equation}{section}
\theoremstyle{plain}
\newcommand{\bp}{\begin{proof}}
\newcommand{\ep}{\end{proof}}
\newcommand{\bl}{\begin{lemma}}
\newcommand{\el}{\end{lemma}}
\newcommand{\bt}{\begin{theorem}}
\newcommand{\et}{\end{theorem}}
\newcommand{\bd}{\begin{definition}}
\newcommand{\ed}{\end{definition}}
\newcommand{\ba}{\begin{arrow}}
\newcommand{\ea}{\end{arrow}}
\begin{document}


%
%
%
%




\begin{abstract}

We show that Stolarsky's invariance
principle, known for point distributions on the Euclidean spheres, 
can be extended to the real, complex, and quaternionic  
projective spaces and the octonionic projective plane. 
(The paper will be published in \emph{Journal of Complexity},  2020.)
\end{abstract}

\thanks
{This work is supported by the Program of the Presidium of the Russian Academy of Sciences 
``New Methods of Mathematical Modeling in the Study of Nonlinear Dynamical Systems" under Grant PRAS 08-04.}


\maketitle

\thispagestyle{empty}



\section{Introuction and main results}\label{sec1}

Let $S^d=\{x\in \Rr^{d+1}:\Vert x\Vert=1\}$ be the standard $d$-dimensional unit sphere in $\Rr^{d+1}$
with the geodesic (great circle) metric $\theta$ and the Lebesgue measure
$\mu$ normalized by $\mu(S^d)=1$. 
We write $\CCC(y,t)=\{x\in S^d: (x,y) > t\}$ for the spherical cap
of height $t\in [-1,1]$ centered at 
$y\in S^d$. Here we write $(\cdot,\cdot)$ and $\Vert \cdot \Vert$ for 
the inner product and the Euclidean norm in $\Rr^{d+1}$.

For an $N$-point subset $\DDD_N\subset S^d$,
the spherical cap quadratic discrepancy is defined by
\begin{equation}
\lambda^{cap}[\DDD_N]=
\int_{-1}^1\int_{S^d}\left(\,\# \{|\CCC(y,t)\cap \DDD_N\}-N\mu(\CCC(y,t))\,\right)^2\dd\mu(y)\,
\dd t.
\label{eq1.1*}
\end{equation}

We introduce the sum of pairwise Euclidean distances 
between points of $\DDD_N$
\begin{equation}
\tau [\DDD_N]=\frac 12 \sum\nolimits_{x_1, x_2\in \DDD_N} \Vert x_1-x_2 \Vert
= \sum\nolimits_{x_1, x_2\in \DDD_N} \sin\frac 12 \theta (x_1,x_2),
\label{eq1.2*}
\end{equation}
and write $\langle \tau \rangle$ for the average value of the Euclidean
distance on $S^d$,       
\begin{equation}
\langle \tau \rangle =\frac 12 \iint\nolimits_{S^d\times S^d} \Vert y_1-y_2\Vert \,
d\mu (y_1) \, \dd\mu (y_2).
\label{eq1.12*}
\end{equation}

The study of the quantities \eqref{eq1.1*} and \eqref{eq1.2*} falls within 
the subjects of discrepancy theory and geometry of distances,
see  \cite{2, 6} and references therein.  
It turns out that the quantities \eqref{eq1.1*} and \eqref{eq1.2*} are 
not independent and are intimately 
related by the following remarkable identity
\begin{equation}
\gamma(S^d)\lambda^{cap}[\DDD_N]+\tau [\DDD_N]=\langle \tau \rangle N^2,
\label{eq1.3**}
\end{equation}
for an arbitrary $N$-point subset $\DDD_N\subset S^d$. Here $\gamma(S^d)$
is a positive constant independent of $\DDD_N$,
\begin{equation}
\gamma(S^d)=\frac{d\,\sqrt{\pi}\,\,\Gamma(d/2)}{2\,\Gamma((d+1)/2)}\,\,
\thicksim\,\,\sqrt{\pi d/2}  \, .
\label{eq1.3***}
\end{equation}

The identity \eqref{eq1.3**} was established  by Stolarsky \cite{33},
and known in the literature as Stolarsky's invariance principle. 
Its original proof has been essentially simplified 
by Brauchart and Dick \cite{11}. Further simplifications were given in  
the paper \cite{8} by Bilyk, Dai and Matzke.
Particularly, the explicit formula \eqref{eq1.3***} has been given in \cite{8, 11}.
In our notation $\gamma (S^d)=(2C_d)^{-1}$, where $C_d$ is the constant
in \cite[Theorem~2.2]{8} and \cite[Eq. (6)]{11}.

In the present paper we consider the relations of this type in
a more general setting. 
Let $\M$ be a compact metric measure space with a fixed metric $\theta$ 
and a finite Borel measure $\mu$, normalized, for convenience, by 
\begin{equation}
\diam (\MMM,\theta)=\pi, \quad \mu (\MMM)=1,
\label{eq1.1}
\end{equation}
where 
$\diam (\EEE,\rho)=\sup \{ \rho(x_1,x_2): x_1,x_2\in \EEE\}$
denotes the diameter of a subset $\EEE\subseteq \MMM$  with respect to 
a metric $\rho$. 

We write $\BBB(y,r)=\{x\in\MMM:\theta (x,y)<r\}$ for the ball of radius $r\in \III$
centered at  $y\in \MMM$ and of volume $v(y,r)=\mu (\BBB(y,r))$. 
Here $\III = \{r=\theta(x_1,x_2): x_1,x_2\in \MMM\}$ denotes the set of all possible radii. 
If the space $\MMM$ is connected, we have $\III = [0,\pi]$.

We consider distance-invariant metric spaces. Recall that a metric space $\MMM$ is called distance-invariant, if the volume  of any
ball $v(r)=v(y,r)$ is independent of $y\in \MMM$, see \cite[p.~504]{24}.
The typical examples of distance-invariant spaces are homogeneous spaces $\MMM=G/K$, 
where $G$ is a compact group, $K\subset G$ is a closed subgroup, and a metric
$\theta$ and a measure $\mu$ on $\MMM$ are $G$-invariant. 

For an $N$-point subset $\DDD_N\subset \MMM$, the ball quadratic discrepancy is 
defined by
\begin{equation}
\lambda[\xi,\DDD_N]=
\int_{\III}\int_{\MMM}\left(\,\# \{\BBB(y,r)\cap \DDD_N\}-Nv(r))\,\right)^2\,
\dd\mu(y)\, \dd \xi (r),
\label{eq1.3*}
\end{equation}
where $\xi$ is a measure on the set of radii $\III$.

Notice that for $S^d$ spherical caps and balls are 
related by  $\CCC(y,t)=\BBB(y,r)$, $t=\cos r$, and  the 
discrepancies \eqref{eq1.1*} and \eqref{eq1.3*} are related by
$\lambda^{cap}[\DDD_N]=\lambda[\xi^{\natural},\DDD_N]$, where  
$\dd\xi^{\natural}(r)=\sin r\,\dd r,\, r\in \III =[0,\pi]$.

The ball quadratic discrepancy \eqref{eq1.3*} can be written in the form
\begin{equation}
\lambda [\xi, \DDD_N] =
\sum\nolimits_{x_1,x_2\in \DDD_N} \la (\xi, x_1,x_2)
\label{eq1.7}
\end{equation}
with the kernel 
\begin{equation}
\lambda(\xi,x_1,x_2)=\int_\III\int_\MMM
\Lambda  (\BBB(y,r),x_1)\,\Lambda (\BBB(y,r),x_2)
\, \dd\mu (y)\,\dd\xi (r)\, ,
\label{eq1.6}
\end{equation}
where 
\begin{equation}
\Lambda  (\BBB(y,r),x)=\chi(\BBB(y,r),x)- v(r),
\label{eq1.6**}
\end{equation}
and $\chi(\E,\cdot)$ denotes the characteristic function of a subset $\E\subseteq\M$.

The symmetry of the metric $\theta$ implies the following relation   
\begin{equation}
\chi (\BBB(y,r),x)=\chi (\BBB(x,r),y)=\chi_0 (r-\theta (x,y)),
\label{eq1.16*}
\end{equation}
where $\chi_0(\cdot)$ is the characteristic function of the half-axis 
$(0,\infty)$.
Substituting \eqref{eq1.6**} into \eqref{eq1.6} and using \eqref{eq1.16*}, we obtain
\begin{equation}
\lambda(\xi,x_1,x_2)=\int_\III
\Big (\mu  (\BBB(x_1,r)\cap \BBB(x_2,r)) -v(r)^2 \Big)
\, \dd\xi (r)
\label{eq1.6*}
\end{equation}

For an arbitrary metric $\rho$ on $\MMM$ we introduce 
the sum of pairwise distances  
\begin{equation}
\rho [\DDD_N] =\sum\nolimits_{x_1,x_2\in \DDD_N} \rho (x_1,x_2).
\label{eq1.10}
\end{equation}
and the average value 
\begin{equation}
\langle \rho \rangle =\int\nolimits_{\MMM\times\MMM} \rho (y_1,y_2) \,
\dd\mu (y_1) \, \dd\mu (y_2).
\label{eq1.12}
\end{equation}
We introduce the following symmetric difference metrics on the space $\M$
\begin{align}
\theta^{\Delta} (\xi, y_1,y_2) & =\frac 12\int_\III
\mu (\BBB(y_1,r)\Delta \BBB(y_2,r))
\, \dd\xi(r) \notag
\\
& =\frac 12\int_\III\int_{\MMM}\chi(\BBB(y_1,r)\Delta \BBB(y_2,r),y)
\,\dd\mu(y)\,\dd\xi(r),
\label{eq1.13}
\end{align}
where 
$$\BBB(y_1,r)\Delta \BBB(y_2,r)=\BBB(y_1,r)\cup \BBB(y_2,r) \setminus 
\BBB(y_1,r)\cap \BBB(y_2,r)$$
is the symmetric difference of the balls  $\BBB(y_1,r)$ and $\BBB(y_2,r)$.
We have
\begin{equation}
\chi(\BBB(y_1,r)\Delta \BBB(y_2,r),y) = 
\vert\chi (\BBB(y_1,r),y)-\chi(\BBB(y_2,r),y)\vert.
\label{eq1.15}
\end{equation}
Therefore,
\begin{equation}
\theta^{\Delta} (\xi, y_1,y_2)=
\frac 12\int_\III\int_{\MMM}\vert\chi (\BBB(y_1,r),y)-\chi(\BBB(y_2,r),y)\vert
\,\dd\mu(y)\,\dd\xi(r).
\label{eq1.15*}
\end{equation}
On the other hand, we have 
\begin{align}
& \chi (\BBB(y_1,r)\Delta \BBB(y_2,r))   \notag
\\
& =\chi(\BBB(y_1,r),y) +\chi 
(\BBB(y_2,r),y) -2\chi (\BBB(y_1,r),y) \chi (\BBB(y_2,r),y).
\label{eq1.17*}
\end{align}
Substituting \eqref{eq1.17*} into  \eqref{eq1.13} and using \eqref{eq1.16*},
we obtain 
\begin{equation}
\theta^{\Delta}(\xi,y_1,y_2) 
=\int_{\III} \Big(v(r)-\mu (\BBB(y_1,r)\cap \BBB(y_2,r))\Big)\, \dd \xi (r),
\label{eq1.14*}
\end{equation}
and 
\begin{align}
\langle \theta^{\Delta}(\xi) \rangle  
= \int_{\III}\Big(v(r)-v(r)^2\Big)\, \dd\xi (r).
\label{eq1.19}
\end{align}

In line with the definition \eqref{eq1.10}, we put 
\begin{equation*}
\theta^{\Delta} [\xi,\DDD_N] =\sum\nolimits_{x_1,x_2\in \DDD_N} 
\theta^{\Delta} (\xi,x_1,x_2).
\end{equation*}

Comparing the relations \eqref{eq1.6*}, \eqref{eq1.14*}, and \eqref{eq1.19}, 
we arrive at the following. 

\begin{proposition}\label{prop1.1}
Let a compact metric measure space $\M$ with a metric $\theta$
and a measure $\mu$  be distance-invariant. Then we have
\begin{align}
\lambda (\xi,y_1,y_2)+\theta^{\Delta}(\xi ,y_1,y_2) 
= \langle 
\theta^{\Delta} (\xi)\rangle .
\label{eq1.30}
\end{align}
In particular, we have the following $L_1$-invariance principle
\begin{equation}
\la [\,\xi,\DDD_N\,]+\theta^{\Delta}[\,\xi , \DDD_N\,]  = \langle 
\theta^{\Delta} (\xi)\rangle \, N^2
\label{eq1.31}
\end{equation}
for an arbitrary $N$-point subset $\DDD_N\subset \MMM$. 

The identities \eqref{eq1.30} and \eqref{eq1.31} hold 
with any measure $\xi$ on the set of radii $\III$ such that 
the integrals \eqref{eq1.6*}, \eqref{eq1.14*} and \eqref{eq1.19} converge 
\textup{(}for example, with any finite measure $\xi$\textup{)}.  
\end{proposition}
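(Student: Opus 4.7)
The key observation is that the three identities \eqref{eq1.6*}, \eqref{eq1.14*}, and \eqref{eq1.19} established just before the proposition already contain essentially everything needed; what remains is to combine them and to note where distance-invariance has entered.

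My plan is first to prove the pointwise identity \eqref{eq1.30}. Setting $y_1 = x_1$, $y_2 = x_2$ and adding \eqref{eq1.6*} and \eqref{eq1.14*}, the intersection term $\mu(\BBB(x_1,r) \cap \BBB(x_2,r))$ cancels exactly, leaving the integrand $v(r) - v(r)^2$; integrating against $\dd\xi(r)$ and invoking \eqref{eq1.19} yields the right-hand side of \eqref{eq1.30}. Distance-invariance enters at exactly one place, namely in the derivation of \eqref{eq1.19}: when one computes $\int_{\MMM \times \MMM} \mu(\BBB(y_1,r) \cap \BBB(y_2,r)) \, \dd\mu(y_1)\, \dd\mu(y_2)$ via Fubini together with the symmetry \eqref{eq1.16*}, one arrives at $\int_{\MMM} v(y,r)^2\, \dd\mu(y)$, and this collapses to $v(r)^2$ only because $v(y,r)$ is assumed independent of $y$.

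The second step is immediate: summing the pointwise identity \eqref{eq1.30} over all ordered pairs $(x_1, x_2) \in \DDD_N \times \DDD_N$ and using the representation \eqref{eq1.7} of $\lambda[\xi, \DDD_N]$ together with the analogous bilinear definition of $\theta^{\Delta}[\xi, \DDD_N]$, the right-hand side contributes $N^2 \langle \theta^{\Delta}(\xi) \rangle$, which is exactly \eqref{eq1.31}.

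There is essentially no serious obstacle. The only care required is the application of Fubini's theorem at the step where $\dd\xi(r)$ is interchanged with the spatial integrations; this is justified under the convergence hypothesis stated at the end of the proposition, which ensures absolute convergence of each of the integrals involved. Once \eqref{eq1.6*}, \eqref{eq1.14*}, and \eqref{eq1.19} are in hand, the entire argument reduces to a one-line algebraic cancellation followed by a double sum.
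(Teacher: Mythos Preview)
Your proof is correct and is exactly the paper's approach: the paper's entire argument is the single sentence ``Comparing the relations \eqref{eq1.6*}, \eqref{eq1.14*}, and \eqref{eq1.19}, we arrive at the following,'' and your addition-and-cancellation of the intersection term, followed by summation over $\DDD_N\times\DDD_N$, is precisely that comparison made explicit.

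One small correction to your side remark: distance-invariance does not enter only in the derivation of \eqref{eq1.19}. It is already used in obtaining both \eqref{eq1.6*} and \eqref{eq1.14*}, since in each case the spatial integrals $\int_{\MMM}\chi(\BBB(y,r),x_i)\,\dd\mu(y)=v(x_i,r)$ (via \eqref{eq1.16*}) must be replaced by the constant $v(r)$. This does not affect your proof of the proposition, which takes those three identities as given.
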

Other versions and applications of this result can be found in \cite{30}.

Recall that a metric space $\MMM$ with a metric $\rho$ is called isometrically
$L_q$-embeddable ($q=1\,\, \mbox {or}\,\, 2$), if there exists 
a map $\varphi:\MMM\ni x\to \varphi(x)\in L_q$, such that
$\rho(x_1,x_2)=\Vert\varphi(x_1)-\varphi(x_2)\Vert_{L_q}$ for all $x_1$, 
$x_2\in \M$. Notice that the  $L_2$-embeddability 
is stronger and
implies the $L_1$-embeddability, see~\cite[Sec.~6.3]{17}. 

Since the space $\MMM$ is isometrically $L_1$-embeddable with respect to
the symmetric difference metrics $\theta^{\Delta} (\xi)$, see \eqref{eq1.15*},
the identity \eqref{eq1.31} is called the $L_1$-invariance principle. 
At the same time, Stolarsky's invariance principle should be called 
the $L_2$-invariance principle, because it involves the Euclidean metric.

In the present paper we shall prove the $L_2$-invariance principles for compact
Riemannian symmetric manifolds of rank one. All these manifolds are
completely classified, see, for example,~\cite[Chap.3]{7} and~\cite[Sec.~8.12]{36}.
They are homogeneous spaces $\MMM=G/K$, where  $G$ and $K\subset G$ are
compact Lie groups. The complete list of all compact
Riemannian symmetric manifolds of rank one is the following: 

(i) The $d$-dimensional Euclidean spheres 
$S^d=SO(d+1)/SO(d)\times 
\{1\}$, $d\ge 2$, and $S^1=O(2)/O(1) \times \{ 1\}$. 

(ii) The real projective spaces $\Rr P^n=O(n+1)/O(n)\times O(1)$.

(iii) The complex projective spaces $\Cc P^n=U(n+1)/U(n)\times U(1)$.

(iv) The quaternionic projective spaces $\Hh P^n=Sp(n+1)/Sp(n)\times Sp(1)$,

(v) The octonionic projective plane $\Oo P^2=F_4/\Spin (9)$.

Here we use the standard notation from the theory of Lie groups; in particular,  
$F_4$ is one of the exceptional Lie groups in Cartan's classification. 

The indicated projective spaces $ \Ff P^n $ as compact Riemannian manifolds have dimensions $d$, 
\begin{equation}
d=\dim_{\Rr} \Ff P^n=nd_0, \quad d_0=\dim_{\Rr}\Ff,
\label{eq2.1} 
\end{equation}
where $d_0=1,2,4,8$ for $\Ff=\Rr$, $\Cc$, $\Hh$, $\Oo$, correspondingly.

For the spheres $S^d$ we put $d_0=d$ by definition. Projective spaces 
of dimension  $d_0$ ($n=1$) are homeomorphic to the spheres $S^{d_0}$:
$\Rr P^1 \approx S^1, \Cc P^1 \approx S^2,  \Hh P^1 \approx S^4, 
\Oo P^1 \approx S^8$.
We can conveniently agree that $d>d_0$ ($n\ge 2)$ for projective spaces,
while the equality $d=d_0$ holds only for spheres. Under this convention,
the dimensions $d=nd_0$ and $d_0$ define uniquely (up to homeomorphisms) 
the corresponding homogeneous space which we denote by $Q=Q(d,d_0)$.

We consider $Q(d,d_0)$ as a metric measure space with the metric $\theta$
and measure $\mu$ proportional to the invariant Riemannian distance and measure
on $Q(d,d_0)$. The coefficients of proportionality are defined to satisfy \eqref{eq1.1}.
In what follows we always assume that $n=2$ if $\Ff=\Oo$, since  
projective spaces $\Oo P^n$ do not exist for $n>2$. 

The spaces $Q(d,d_0)$ have a very rich geometrical structure and can be also
characterized as compact connected two-point homogeneous spaces. This means
that for any two pairs of points $x_1$, $x_2$ and 
$y_1$, $y_2$ in $Q(d,d_0)$ with $\theta(x_1,x_2)=\theta(y_1,y_2)$ there exists an isometry $g\in G$, such that $y_1=gx_1$, $y_2=gx_2$.
In more detail the geometry 
of spaces  $\Ff P^n$ will be outlined in Section 2.

Any space $Q(d,d_0)$ is distance-invariant and the volume of balls is given by 
\begin{equation}
v(r)=B(d/2,d_0/2)^{-1}
\int^r_0(\sin\frac{1}{2}u)^{d-1}(\cos \frac{1}{2}u)^{d_0-1}\,\dd u, 
\quad r\in [0,\pi],
\label{eq2.2}
\end{equation}
where $B(\cdot,\cdot)$ is the beta function, see \eqref{eq0.1}. 
Equivalent forms of  \eqref{eq2.2}
can be found in the literature, see \cite[pp.~177--178]{19}, \cite[pp.~165--168]{22},  \cite[pp.~508--510]{24}.

The chordal metric on the spaces $Q(d,d_0)$ can be defined by
\begin{equation}
\tau(x_1,x_2)=\sin \frac{1}{2}\theta(x_1,x_2), \quad x_1,x_2\in Q(d,d_0).
\label{eq2.4}
\end{equation}
Notice that the expression \eqref{eq2.4} defines a metric because the function 
$\varphi(\theta)=\sin \theta/2$, $0\le \theta\le \pi$, 
is concave, increasing, and  $\varphi(0)=0$, that implies the triangle 
inequality.
For the sphere $S^d$ we have
$\cos \theta(x_1,x_2)=(x_1,x_2),\, x_1,x_2\in S^d$ and
\begin{equation}
\tau(x_1,x_2)=\sin \frac12\theta(x_1,x_2)=\frac{1}{2}\,\Vert x_1-x_2\Vert\, .
\label{eq2.6*}
\end{equation}
Each projective space $\Ff P^n$ can be canonically embedded into the unit sphere
\begin{equation}
\Pi:Q(d,d_0)\ni x\to \Pi(x)\in S^{m-1}\subset \Rr^m, \quad 
m=\frac{1}{2}(n+1)(d+2),
\label{eq2.6}
\end{equation}
such that
\begin{equation}
\tau(x_1,x_2)=\frac{1}{\sqrt{2}}\|\Pi(x_1)-\Pi(x_2)\|, \quad x_1,x_2\in 
\Ff P^n,
\label{eq2.7}
\end{equation}
where $\|\cdot \|$ is the Euclidean norm in $\Rr^{m}$.
Hence, the metric $\tau(x_1,x_2)$ is proportional to the Euclidean length of a segment
joining the corresponding points $\Pi(x_1)$ and $\Pi(x_2)$ on the unit sphere 
and normalized by $\diam (Q(d,d_0),\tau)=1$.
The embedding \eqref{eq2.6} will be described explicitly in Section~\ref{sec2}.

The chordal metric $\tau$ on the complex projective space $\Cc P^n$ 
is known as the Fubini--Study metric. 
The chordal metric on projective spaces has been discussed in the papers \cite{13, 14}
in connection with special point configurations in such spaces.
The chordal metric has been also defined for Grassmannian manifolds 
in \cite{15}.

Now we are in position to state our main result. 

\begin{theorem}\label{thm1.1}
For any space $Q=Q(d,d_0)$ the chordal metric
\eqref{eq2.4} and the symmetric difference metric \eqref{eq1.13} are related by
\begin{equation}
\tau(x_1,x_2)=\gamma(Q)\,\, \theta^{\Delta}(\xi^{\natural} ,x_1,x_2), \ 
x_1,x_2\in Q,
\label{eq2.8}
\end{equation}
where $\dd\xi^{\natural}(r)=\sin r\,\dd r$, $r\in [0,\pi]$, and 
\begin{equation}
\gamma(Q)=\frac{\langle \tau\rangle}{\langle 
\theta^{\Delta}(\xi^{\natural})\rangle}=\frac{\diam 
(Q,\tau)}{\diam(Q,\,\theta^{\Delta}(\xi^{\natural}))}.
\label{eq2.9}
\end{equation}
\end{theorem}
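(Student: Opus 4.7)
The plan is to reduce the pointwise identity \eqref{eq2.8} to a one-variable statement via two-point homogeneity, compute $\theta^{\Delta}(\xi^{\natural},\cdot,\cdot)$ explicitly, and then exploit the chordal embedding $\Pi$ of \eqref{eq2.6}--\eqref{eq2.7} to exhibit the proportionality. Since $Q=G/K$ is two-point homogeneous, the diagonal $G$-action on $Q\times Q$ has orbits labeled by $\theta(x_1,x_2)$, so both sides of \eqref{eq2.8} are functions of $\theta$ alone; it suffices to show they are proportional as functions of $\theta\in[0,\pi]$.

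Starting from \eqref{eq1.15*} with $\dd\xi^{\natural}(r)=\sin r\,\dd r$, I would swap the order of integration. For fixed $y$ the integrand $|\chi(\BBB(y_1,r),y)-\chi(\BBB(y_2,r),y)|$ is the indicator of $r$ lying between $\theta(y_1,y)$ and $\theta(y_2,y)$, so integration against $\sin r$ yields
$$\int_0^\pi |\chi(\BBB(y_1,r),y)-\chi(\BBB(y_2,r),y)|\,\sin r\,\dd r = |\cos\theta(y_1,y)-\cos\theta(y_2,y)|.$$
From $\|\Pi(x_i)\|=1$ and \eqref{eq2.7} one obtains the bilinear relation $\cos\theta(x_1,x_2)=2(\Pi(x_1),\Pi(x_2))-1$, and substituting gives
$$\theta^{\Delta}(\xi^{\natural},y_1,y_2)=\int_Q |(v,\Pi(y))|\,\dd\mu(y),\qquad v=\Pi(y_1)-\Pi(y_2),\ \ \|v\|=\sqrt{2}\,\tau(y_1,y_2).$$

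The decisive step is showing this integral equals $c\|v\|$ with $c$ independent of the pair. Setting $F(u)=\int_Q|(u,\Pi(y))|\,\dd\mu(y)$, the $G$-equivariance of $\Pi$ and the $G$-invariance of $\mu$ force $F$ to be constant on $G$-orbits in $\Rr^m$. Thus it suffices to verify that as $(y_1,y_2)$ varies over all pairs with $\theta(y_1,y_2)>0$, the unit vector $v/\|v\|$ lies in a \emph{single} $G$-orbit on $S^{m-1}$. On $S^d$ this is the classical rotation-invariance argument. For $\Ff P^n$ with $\Ff\in\{\Rr,\Cc,\Hh\}$, realized via rank-one Hermitian projectors $\Pi(y)=P_y$, the difference $v=P_{y_1}-P_{y_2}$ has rank at most two with eigenvalues $\pm\sin(\theta/2)$ together with zeros, so the normalized matrix $v/\|v\|$ has spectrum $\{+1/\sqrt{2},-1/\sqrt{2},0,\dots,0\}$ regardless of $\theta$, and the spectral theorem delivers a single conjugation orbit under $O(n+1)$, $U(n+1)$, or $Sp(n+1)$. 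For $\Oo P^2$ the same reasoning must be carried out inside the Albert algebra $\frakh_3(\Oo)$ under the $F_4$-action on rank-one idempotents; this case is the main technical obstacle. Granting the orbit claim, $\theta^{\Delta}(\xi^{\natural},y_1,y_2)=c\sqrt{2}\,\tau(y_1,y_2)$, which is \eqref{eq2.8} with $\gamma(Q)=1/(c\sqrt{2})$; integrating against $\mu\otimes\mu$ identifies $\gamma(Q)=\langle\tau\rangle/\langle\theta^{\Delta}(\xi^{\natural})\rangle$, and the common monotonicity of both sides in $\theta$ gives $\gamma(Q)=\diam(Q,\tau)/\diam(Q,\theta^{\Delta}(\xi^{\natural}))$, completing \eqref{eq2.9}.
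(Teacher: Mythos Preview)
Your approach is essentially the paper's: your Fubini computation is exactly Lemma~3.1 (note $|\cos\theta_1-\cos\theta_2|=2|\tau_1^2-\tau_2^2|$), and your integral $\int_Q|(v,\Pi(y))|\,\dd\mu(y)$ with $v=\Pi(y_1)-\Pi(y_2)$ is precisely what the paper obtains in \eqref{eq6.8}--\eqref{eq6.10}. The only substantive divergence is in how the ``single $G$-orbit'' claim for $v/\|v\|$ is verified. You argue spectrally (eigenvalues $\pm\sin(\theta/2),0,\dots,0$, hence a single conjugacy class) and correctly flag $\Oo P^2$ as the obstacle, since there is no ordinary spectral theorem over the nonassociative $\Oo$. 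The paper sidesteps this cleanly: rather than analyzing the spectrum of $v$, it first invokes two-point homogeneity to move the pair $(\Pi_1,\Pi_2)$ onto the \emph{real} geodesic $\Rr P^1\subset\Ff P^n$ as $(Z(v),Z(-v))$, and then computes by hand $Z(v)-Z(-v)=\sin 2v\,(Z_+-Z_-)$ with $Z_\pm$ a fixed antipodal pair (see \eqref{eq6.11}--\eqref{eq6.12}). Because this computation involves only $2\times2$ real blocks, it is valid verbatim for $\Oo P^2$ as well, and the $F_4$-invariance of the trace form on $\HH(\Oo^3)$ does the rest. This is exactly the missing piece in your argument: use two-point homogeneity \emph{before} attempting the orbit analysis, so that the difference $v$ is a real matrix and no octonionic spectral theory is needed.
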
 
              
The proof of Theorem~\ref{thm1.1} is given in Section~\ref{sec3}.
It is clear that the equalities \eqref{eq2.9} follow immediately from \eqref{eq2.8}. 
It suffices to calculate the average values \eqref{eq1.12} of both metrics 
in \eqref{eq2.8} to obtain 
the first equality in \eqref{eq2.9}. Similarly, writing \eqref{eq2.8} for any pair of 
antipodal points $x_1$, $x_2$, $\theta(x_1,x_2) = \pi$, we obtain the second 
equality in \eqref{eq2.9}. Recall that points $x_1, x_2$ are antipodal for a metric
$\rho$ if $\rho(x_1,x_2) = \diam(Q,\rho)$. 
If points $x_1, x_2$ are antipodal for
the metric $\theta$, then, in view of \eqref{eq2.4} and \eqref{eq2.8}, they are also
antipodal for the metrics $\tau$ and $\theta^{\Delta}(\eta^{\natural})$.

Comparing Theorem~1.1 and Proposition~1.1, we arrive at the following.

\begin{corollary}\label{cor2.1}
For any space $Q=Q(d,d_0)$ we have the $L_2$-invariance principle
\begin{equation}
\gamma(Q)\,\lambda [\xi^{\natural},\D_N]+\tau [\D_N]=\langle \tau\rangle 
N^2
\label{eq2.10}
\end{equation}
for an arbitrary $N$-point subset $\D_N\subset Q$ .
\end{corollary}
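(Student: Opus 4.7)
The plan is to derive the $L_2$-invariance principle \eqref{eq2.10} as an immediate combination of the $L_1$-invariance principle furnished by Proposition~\ref{prop1.1} with the metric identity \eqref{eq2.8} provided by Theorem~\ref{thm1.1}. Since every compact two-point homogeneous space $Q(d,d_0)$ is distance-invariant, and since $\xi^{\natural}$ is a finite measure on $[0,\pi]$, the hypotheses of Proposition~\ref{prop1.1} are met for $\xi=\xi^{\natural}$.

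First I would specialize Proposition~\ref{prop1.1} to $\xi=\xi^{\natural}$, obtaining the relation $\lambda[\xi^{\natural},\D_N]+\theta^{\Delta}[\xi^{\natural},\D_N]=\langle\theta^{\Delta}(\xi^{\natural})\rangle\,N^2$. Then I would invoke Theorem~\ref{thm1.1} in two complementary ways. Summing the pointwise identity \eqref{eq2.8} over all ordered pairs in $\D_N\times\D_N$ and using the definition \eqref{eq1.10} yields $\tau[\D_N]=\gamma(Q)\,\theta^{\Delta}[\xi^{\natural},\D_N]$; integrating \eqref{eq2.8} against $\dd\mu(y_1)\,\dd\mu(y_2)$ over $Q\times Q$ yields $\langle\tau\rangle=\gamma(Q)\,\langle\theta^{\Delta}(\xi^{\natural})\rangle$, which is the first equality in \eqref{eq2.9} and will be precisely what is needed to recognize the right-hand side of \eqref{eq2.10}.

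Multiplying the $L_1$-invariance identity through by the constant $\gamma(Q)$ and substituting these two relations produces exactly the claimed identity \eqref{eq2.10}. There is no substantial obstacle at this stage: all the nontrivial content has already been absorbed into Theorem~\ref{thm1.1}, so the corollary follows by a short algebraic manipulation and the linearity of finite summation and integration. Conceptually, the $L_2$-invariance principle is simply the image of the $L_1$-invariance principle under the rescaling by $\gamma(Q)$ that converts the symmetric difference metric $\theta^{\Delta}(\xi^{\natural})$ into the chordal metric $\tau$.
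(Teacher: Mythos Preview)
Your proposal is correct and follows exactly the route the paper indicates: the corollary is stated immediately after the sentence ``Comparing Theorem~1.1 and Proposition~1.1, we arrive at the following,'' and your argument---multiply the $L_1$-invariance identity \eqref{eq1.31} for $\xi=\xi^{\natural}$ by $\gamma(Q)$ and use \eqref{eq2.8} together with \eqref{eq2.9} to convert $\theta^{\Delta}(\xi^{\natural})$ into $\tau$---is precisely that comparison spelled out.
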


The identity \eqref{eq2.10} can be thought of as an extension of
Stolarsky's invariance principle to all compact
Riemannian symmetric manifolds of rank one.

Now we wish to calculate 
the constants $\langle \tau\rangle$ and $\gamma(Q)$ in 
the invariance principle \eqref{eq2.10}. 
Using \eqref{eq2.2}, \eqref{eq2.4}, and the formula \eqref{eq0.1} for 
the beta function, we immediately obtain
$$\langle \tau\rangle 
= B(d/2, d_0 /2)^{-1}\, B((d+1)/2, d_0 /2)\, .$$  
The explicit calculation of the constant $\gamma (\F P^n)$ is more differentiated.
In principle, for this purpose, one can use the first equality 
in \eqref{eq2.9} 
and the formula \eqref{eq1.19}. 
In the case of
 $\Ff P^n,\, \Ff \neq \Rr $, the integrals \eqref{eq2.2} have rather simple
explicit expressions, see \cite[p.~341]{19aa}. 
To calculate the constant $\gamma(\Rr P^n)$ one can use the integral
representation  \eqref{eq6.7***} given below. 
However, such calculations turn out to be rather cumbersome.
In the present paper, we shall use a more general approach relying on
the theory of spherical functions on the spaces $Q(d,d_0)$ to obtain the following.
\begin{theorem}\label{thm1.2}
For any space $Q(d,d_0)$, we have
\begin{align}
\gamma(Q(d,d_0))
=\frac{\sqrt{\pi}}{4}\,(d+d_0)\,
\frac{\Gamma(d_0/2)}{\Gamma((d_0+1)/2)}
=\frac{d+d_0}{2d_0}\,\gamma(S^{d_0})\, ,
\label{eq1.33*}
\end{align}
where $\gamma(S^{d_0})$ is defined by \eqref{eq1.3***}.
\end{theorem}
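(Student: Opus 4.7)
By (2.9), $\gamma(Q) = \langle\tau\rangle/\langle\theta^\Delta(\xi^\natural)\rangle$, so the task reduces to computing both averages. The numerator is immediate: distance-invariance reduces (1.12) to $\langle\tau\rangle = \int_0^\pi \sin(r/2)\,v'(r)\,dr$, which by (2.2) is a standard beta integral yielding $\langle\tau\rangle = B((d+1)/2,\,d_0/2)/B(d/2,\,d_0/2)$. The denominator $\langle\theta^\Delta(\xi^\natural)\rangle = \int_0^\pi v(r)(1-v(r))\sin r\,dr$ is the genuine obstacle since $v(r)^2$ is not elementary; the plan is to handle it through the theory of spherical functions on $Q(d,d_0)$, as the authors indicate.

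On the rank-one symmetric space $Q(d,d_0)$ the zonal spherical functions are the normalized Jacobi polynomials $R_k^{(\alpha,\beta)}(\cos\theta)$ with $\alpha = d/2-1$, $\beta = d_0/2-1$. Expanding the ball indicator $\chi(\BBB(y_0,r),\cdot)$ in this basis produces coefficients $\mu_k(r) = \int_0^r R_k^{(\alpha,\beta)}(\cos s)\,v'(s)\,ds$, and Parseval combined with $\|\chi(\BBB(y_0,r))\|_2^2 = v(r)$ yields $v(r)-v(r)^2 = \sum_{k\ge 1}\mu_k(r)^2\dim H_k$. Rather than summing the series directly, I would exploit Theorem~\ref{thm1.1}: expanding both sides of $\sin(\theta/2) = \gamma(Q)\,\theta^\Delta(\xi^\natural,\theta)$ in the reproducing kernels $K_k(x_1,x_2) = \dim H_k\cdot R_k^{(\alpha,\beta)}(\cos\theta(x_1,x_2))$ and matching coefficients yields
\[
\gamma(Q) = -\lambda_k^\tau/B_k \qquad\text{for any } k\ge 1,
\]
where $\lambda_k^\tau = \int_0^\pi \sin(\theta/2)\,R_k^{(\alpha,\beta)}(\cos\theta)\,v'(\theta)\,d\theta$ and $B_k = \int_0^\pi \mu_k(r)^2\sin r\,dr$.

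The choice $k=1$ is the cleanest: the classical Jacobi identity $\int_y^1 P_k^{(\alpha,\beta)}(x)(1-x)^\alpha(1+x)^\beta\,dx = \frac{(1-y)^{\alpha+1}(1+y)^{\beta+1}}{2k}\,P_{k-1}^{(\alpha+1,\beta+1)}(y)$ collapses at $k=1$ because $P_0^{(\alpha+1,\beta+1)}\equiv 1$, producing the closed form $\mu_1(r) = \frac{2\sin^d(r/2)\cos^{d_0}(r/2)}{d\,B(d/2,d_0/2)}$. Both $B_1$ and $\lambda_1^\tau$ then reduce to beta integrals (for $\lambda_1^\tau$, use $\cos\theta = 1-2\sin^2(\theta/2)$ to split into two pieces), and substituting $B(d+1,d_0+1) = d\,d_0\,B(d,d_0)/((d+d_0)(d+d_0+1))$ puts $\gamma(Q)$ into the form $\frac{d+d_0}{8}\cdot\frac{B((d+1)/2,d_0/2)\,B(d/2,d_0/2)}{B(d,d_0)}$. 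Applying the Legendre duplication formula $\Gamma(z)\Gamma(z+1/2) = \sqrt{\pi}\,2^{1-2z}\Gamma(2z)$ to $\Gamma(d)$ and $\Gamma(d+d_0)$ collapses this to the stated expression, and the second equality follows by direct comparison with (1.3***). The main obstacle is locating the Jacobi identity that evaluates $\mu_1(r)$ in closed form; everything after that is algebraic bookkeeping.
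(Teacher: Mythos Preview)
Your proposal is correct and follows essentially the same route as the paper: expand both $\tau$ and $\theta^{\Delta}(\xi^{\natural})$ in zonal spherical functions (the paper's Lemmas~4.1 and~4.2), use Theorem~\ref{thm1.1} to equate coefficients, take the $k=1$ term so that the Jacobi integration identity collapses to a pure beta integral, and then simplify with the duplication formula. Your intermediate expression $\tfrac{d+d_0}{8}\cdot B((d+1)/2,d_0/2)\,B(d/2,d_0/2)/B(d,d_0)$ is a one-line rewriting of the paper's (4.16), and the Jacobi identity you single out is exactly what produces the factor $\{P_{l-1}^{(\alpha+1,\beta+1)}(\cos r)\}^2$ in the paper's formula (4.14) for $A_l(\xi)$; your opening paragraph about computing $\langle\theta^{\Delta}(\xi^{\natural})\rangle$ directly from (1.19) is a detour you (rightly) abandon.
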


Theorem~1.2 is proved in section~4.
For the sphere $S^d=Q(d,d)$, the relation \eqref{eq1.33*} coincides with 
the formula \eqref{eq1.3***}.
For projective spaces, from \eqref{eq1.33*} we obtain the following.
\begin{corollary}\label{cor2.2}
For projective spaces $\F P^n=Q(nd_0,d_0),\,d_0=\dim_{\Rr}\F$, we have 
\begin{equation}
\gamma(\F P^n)=\frac{n+1}{2}\, \gamma (S^{d_0})\, ,
\label{eq1.33}
\end{equation}
and therefore, 
\begin{equation}
\left.
\begin{aligned}
& \gamma (\Rr P^n)  = \frac{n+1}{2}\, \gamma (S^{1}) = \frac{\pi}{4} \, (n+1)\, ,
\\
& \gamma (\Cc P^n)=  \frac{n+1}{2}\, \gamma (S^{2}) = n+1\, ,
\\
& \gamma (\Hh P^n) =  \frac{n+1}{2}\, \gamma (S^{4}) = \frac{4}{3}\, (n+1)\, ,
\\
& \gamma (\Oo P^2) = \,\,\frac{3}{2}\, \gamma (S^{8}) = \frac{192}{35}\, .
\end{aligned}
\label{eq1.34}
\right\}
\end{equation}
\end{corollary}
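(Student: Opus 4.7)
The plan is to derive Corollary~\ref{cor2.2} as a direct consequence of Theorem~\ref{thm1.2} together with the explicit formula \eqref{eq1.3***} for $\gamma(S^{d_0})$; no new ideas are required, only substitution and the evaluation of four Gamma-function expressions.

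First I would establish \eqref{eq1.33}. Since $\F P^n$ is, by \eqref{eq2.1}, the space $Q(nd_0,d_0)$, substituting $d=nd_0$ into the second form of \eqref{eq1.33*} gives
\begin{equation*}
\gamma(\F P^n)=\gamma(Q(nd_0,d_0))=\frac{nd_0+d_0}{2d_0}\,\gamma(S^{d_0})=\frac{n+1}{2}\,\gamma(S^{d_0}),
\end{equation*}
which is \eqref{eq1.33}. This step is immediate and contains the only conceptual content of the corollary: the factor $(d+d_0)/(2d_0)$ in Theorem~\ref{thm1.2} collapses to $(n+1)/2$ precisely because $d/d_0=n$ on a projective space.

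Next I would compute the numerical values of $\gamma(S^{d_0})$ for $d_0\in\{1,2,4,8\}$ from \eqref{eq1.3***} and multiply each by $(n+1)/2$. Using $\Gamma(1/2)=\sqrt{\pi}$, $\Gamma(1)=1$, $\Gamma(3/2)=\tfrac12\sqrt{\pi}$, $\Gamma(2)=1$, $\Gamma(5/2)=\tfrac{3}{4}\sqrt{\pi}$, $\Gamma(4)=6$, and $\Gamma(9/2)=\tfrac{105}{16}\sqrt{\pi}$, one reads off
\begin{equation*}
\gamma(S^1)=\frac{\pi}{2},\qquad \gamma(S^2)=2,\qquad \gamma(S^4)=\frac{8}{3},\qquad \gamma(S^8)=\frac{128}{35},
\end{equation*}
and then $\gamma(\Rr P^n)=\tfrac{\pi}{4}(n+1)$, $\gamma(\Cc P^n)=n+1$, $\gamma(\Hh P^n)=\tfrac{4}{3}(n+1)$, and $\gamma(\Oo P^2)=\tfrac{3}{2}\cdot\tfrac{128}{35}=\tfrac{192}{35}$, which is \eqref{eq1.34}. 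Here one uses the convention $n=2$ for $\Ff=\Oo$ noted in the paper.

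There is no real obstacle in this argument: once Theorem~\ref{thm1.2} is granted, the corollary is just an arithmetic unpacking. The only small point to mention is the convention $d>d_0$ ($n\ge 2$) for projective spaces, which ensures that the substitution $d=nd_0$ is unambiguous and that $Q(nd_0,d_0)$ really is $\Ff P^n$ rather than a sphere (for $n=1$ one would recover $\gamma(S^{d_0})$, consistent with the homeomorphisms $\Rr P^1\approx S^1$, $\Cc P^1\approx S^2$, $\Hh P^1\approx S^4$, $\Oo P^1\approx S^8$ listed earlier).
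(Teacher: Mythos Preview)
Your proof is correct and follows exactly the approach the paper intends: the corollary is stated immediately after Theorem~\ref{thm1.2} with the remark that it is obtained from \eqref{eq1.33*}, and your substitution $d=nd_0$ together with the Gamma-function evaluations of $\gamma(S^{d_0})$ is precisely the arithmetic unpacking this entails. If anything, you have supplied more detail than the paper itself, which gives no explicit computation for \eqref{eq1.34}.
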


Notice that the comparison of
the formulas \eqref{eq1.3***} with \eqref{eq1.33} and \eqref{eq1.34}
shows that for  spheres and projective spaces the behavior 
of the constants $\gamma(Q(d,d_0))$ differs essentially in large
dimensions.

In conclusion of this section, we briefly describe applications of
the invariance principle \eqref{eq2.10} to uniform point distributions
in the spaces $Q=Q(d,d_0)$. 
With the help of the invariance principle \eqref{eq2.10**}, the results of distance geometry can be transformed to the corresponding results of discrepancy theory and vice versa.

Let us consider the following extremal quadratic discrepancies 
and sums of pair-wise chordal distances 
\begin{equation}
\lambda_N (Q) =\inf\nolimits_{\DDD_N} \lambda [\xi^{\natural}, \DDD_N]\, , \quad
\tau_N(Q)=\sup\nolimits_{\DDD_N}\tau [\DDD_N]\, ,
\label{eq1.9}
\end{equation}
where the infimum and supremum are taken over all $N$-point subsets 
$\DDD_N\subset Q$. These quantities can be thought of as 
geometrical characteristics of the spaces $Q$.

From \eqref{eq2.10}, we obtain the identity
\begin{equation}
\gamma(Q)\,\lambda_N(Q)+\tau_N(Q)=\langle \tau\rangle N^2 .
\label{eq2.10**}
\end{equation}
 
First of all, the identity \eqref{eq2.10**} implies the non-trivial inequality  
$\tau_N(Q)\le\langle \tau \rangle N^2$. 
This inequality can be essentially improved. We have the upper bound
\begin{equation}
\langle \tau \rangle N^2 -\tau_N(Q) \lesssim  N^{1-\frac{1}{d}}\, ,
\label{eq1.3***b}
\end{equation}
and by  \eqref{eq2.10**}, we obtain 
\begin{equation}
\lambda_N(Q)\lesssim N^{1-\frac{1}{d}} \, .
\label{eq1.3***a}
\end{equation}
On the other hand, we have the lower bound 
\begin{equation}
\lambda_N (Q)\gtrsim  N^{1-\frac{1}{d}} \, ,
\label{eq1.3****b}
\end{equation}
and by \eqref{eq2.10**}, we obtain
\begin{equation}
\langle \tau \rangle N^2 -\tau [\DDD_N] \gtrsim N^{1-\frac{1}{d}}\, .
\label{eq1.3****a}
\end{equation}
Combining the above bounds \eqref{eq1.3***b}---\eqref{eq1.3****a}, we find
the sharp orders of the extremal quantities \eqref{eq1.9}
\begin{equation}
\left.
\begin{aligned}
& \langle \tau \rangle N^2 -\tau_N(Q) \simeq  N^{1-\frac{1}{d}}\, ,
\\
& \lambda_N (Q)\simeq  N^{1-\frac{1}{d}} \, .
\end{aligned}
\label{eq1.34a}
\right\}
\end{equation}

For the spheres $S^d$, the upper bounds \eqref{eq1.3***b} and \eqref{eq1.3***a}
have been established by Alexander \cite{1} and Stolarsky \cite{33}, while
the lower bounds \eqref{eq1.3****b} and \eqref{eq1.3****a}
have been established by Beck \cite{5}, see also \cite{2, 6}.
For all spaces $Q(d,d_0)$ the bounds 
\eqref{eq1.3***b} -- \eqref{eq1.3***a} 
were proved in \cite{31}. Notice that the upper bounds of the type 
\eqref{eq1.3***b} and \eqref{eq1.3***a} can be established for very general
compact metric measure spaces, see \cite{5*, 30, 32}. At the same time,
the lower bounds  \eqref{eq1.3****b} and \eqref{eq1.3****a}
are much more specific. Their proof relies on harmonic analysis 
on the homogeneous spaces $Q(d,d_0)$, see \cite{31}, and can not be 
extended to general compact metric measure spaces.

The paper is organized as follows. In Section~2 we define and discuss
the chordal metrics on the projective spaces
$\Ff P^n$, $\F=\Rr$, $\Cc,$ $\Hh$, $n\ge 2$, and the octonionic projective 
plane  $\Oo P^2$ in terms of special models for such spaces. 
For the reader's convenience, we describe these 
models in close detail and give the necessary references.
In Section~3 we prove Theorem~1.1 relying on the results of Section~2 and 
a special representation for symmetric difference metrics (Lemma~3.1).
For completeness, in Section~3 we give a short proof of 
Stolarsky's invariance principle for the spheres $S^d$. 
A very short proof of Theorem~1.1 can be given in the special case
of real projective spaces.
This simple proof, specific for $\Rr P^n$, is also given in Section~3.
In Section~4 we calculate the constants $\gamma(Q(d,d_0))$ and 
prove Theorem~1.2. The proof relies on the zonal spherical
function expansions for the chordal and symmetric difference metrics
(Lemmas 4.1 and 4.2).
In conclusion of Section~4, we briefly discuss
explicit formulas for integrals with Jacobi polynomials
which follow from our calculations (Remark~4.2).


\section{Models of projective spaces and chordal metrics }\label{sec2}

Recall the general facts on the division algebras $\Rr, \Cc, \Hh, \Oo$ 
over the field of real numbers. 
We have the natural inclusions $\Rr\subset \Cc\subset \Hh\subset \Oo$,
where the octonions $\Oo$ are a nonassociative and noncommutative algebra
of  dimension 8  with a basis $\{1, e_1, e_2, e_3, e_4, e_5, e_6, e_7\}$ 
(the multiplication table for these elements can be found, for example, 
in \cite[p.~150]{3} and \cite[p.~90]{7}),
the quaternions $\Hh$ are an associative but noncommutative subalgebra of
dimension 4 spanned by $\{1,e_1, e_2, e_3\}$, finally, $\Cc$ and $\Rr$ are
associative and commutative subalgebras of dimensions 2 and 1 spanned by 
$\{1, e_1\}$ and $\{1\}$. 
From the multiplication table one can easily see that for any two indexes
$1 \le i,j \le 7,i\ne j, $ there exists an index $1 \le k \le 7$, such that
\begin{equation}
e_ie_j=-e_je_i=e_k, \quad i < j, \quad e^2_i=-1.
\label{eq5.1}
\end{equation}
Let $a=\alpha_0+\sum\nolimits_{i=1}^{7}\alpha_{i} e_i, \, 
\alpha_i\in \Rr$, $0\le i\le 7$, be a typical octonion. We write  $\RE a=\alpha_0$  
for the real part,  
$\bar a=\alpha_0-\sum\nolimits^{7}_{i=1} \alpha_ie_i$ for the conjugation, 
$\vert a\vert=\big (\alpha^2_0+\sum\nolimits^{7}_{i-1}\alpha^2_i\big)^{1/2}$ for the norm. 
Using \eqref{eq5.1}, one can easily check that 
\begin{equation}
\RE ab=\RE ba,\quad \overline{ab}=\overline{ba}, \quad 
\vert a\vert^2=a\bar a=\bar aa, \quad \vert ab\vert=\vert a\vert\,\vert b\vert. 
\label{eq5.2}
\end{equation}
Notice that by a theorem of Artin a subalgebra in $\Oo$ generated 
by any two octonions is associative and isomorphic to one of the algebras 
$\Hh$, $\Cc$, or $\Rr$, see \cite{3}.
 
The usual model of projective spaces over
the associative algebras $\Ff=\Rr$, $\Cc$, $\Hh$ is the following, 
see \cite{3, 7, 20, 36}. 
Let $\F^{n+1}$ be a linear
space of vectors $\mathbf a=(a_0,\dots,a_n)$, $a_i\in \Ff$, $0\le i\le n$
with the right multiplication by scalars $a\in\Ff$, the Hermitian
inner product 
\begin{equation}
(\mathbf a,\mathbf b) =\sum\nolimits^{n}_{i=0} \bar a_i b_i ,\quad
\mathbf a,\mathbf b \in \F^{n+1},
\label{eq5.3}
\end{equation}
and the norm $\vert \mathbf a\vert$,
\begin{equation}
\vert\mathbf a\vert^2=(\mathbf a, \mathbf a) =
\sum\nolimits^{n}_{i=0} \vert a_i\vert^2. 
\label{eq5.4}
\end{equation}

In view of associativity of the algebras $\Ff=\Rr$, $\Cc,\Hh$,  
a projective space $\Ff P^n$ can be defined as a set of 
one-dimensional (over  $\Ff$) subspaces in $\Ff^{n+1}$: 
\begin{equation}
\Ff P^n=\{ p(\mathbf a)=\mathbf a \Ff : \mathbf a \in \Ff^{n+1}, \, 
|\mathbf a|=1\} .
\label{eq5.5}
\end{equation}
The metric $\theta$ on $\Ff P^n$ is defined by 
\begin{equation}
\cos \frac 12 \theta (\mathbf a,\mathbf b)\!=\!|(\mathbf a,\mathbf b)|, \quad
\mathbf a,\mathbf b\in \Ff^{n+1}, \quad  |\mathbf a|\!=\!|\mathbf b|\!=\!1, \,\,  
0\le \theta (\mathbf a,\mathbf b)\le \pi .
\label{eq5.6}
\end{equation}
In other words,  $\frac 12 \theta (\mathbf a,\mathbf b)$ is the angle between  
the subspaces $p(\mathbf a)$ and $p(\mathbf b)$.
The transitive group of isometries   $U(n+1, \Ff)$ for the metric $\theta$ consists of 
nondegenerate linear transformations of the space  $\Ff^{n+1}$, preserving  
the inner product  \eqref{eq5.3}, and the stabilizer of a point is isomorphic to
the subgroup $U(n,\Ff)\times U(1,\Ff)$. Hence, 
\begin{equation}
\F P^n=U(n+1, \Ff)/ U(n,\Ff) \times U(1,\Ff).
\label{eq5.7}
\end{equation}
The groups $U(n+1,\Ff)$ are indicated in Section~1. 

There is another model where a projective space $\Ff P^n$, $\Ff=\Rr, \Cc, 
\Hh$, is identified
with the set of orthogonal projectors onto the subspaces 
$p(\mathbf a)\subset\Ff^{n+1}$. 
This model admits a generalization to the octonionic projective plane 
$\Oo P^2$ 
and in its terms the chordal metric can be naturally defined for all 
such projective spaces.

Let $\HH (\F^{n+1})$ denote the set of all Hermitian ${(n+1)}\times (n+1)$ matrices 
with the entries in $\Ff$, $\F=\Rr,\Cc,\Hh,\Oo$, 
\begin{equation}
\HH (\Ff^{n+1})= \{ A=((a_{ij})) : a_{ij}=\overline{a}_{ji}, \;
a_{ij}\in \Ff, \, 0\le i,j\le n \}\, ,  
\label{eq5.8}
\end{equation}
where $n=2$ if $\Ff=\Oo$.
It is clear that $\HH(\Ff^{n+1})$ is a linear space over $\Rr$ of dimension 
\begin{equation}
m=\dim_{\Rr}\HH (\Ff^{n+1})=\frac 12 (n+1) (d+2), \quad d = n d_0.
\label{eq5.9}
\end{equation}

The space $\HH (\F^{n+1})$ is equipped  with the symmetric real-valued inner product 
\begin{equation}
\langle A,B\rangle =\frac 12 \Tr (AB+BA)= \RE \Tr AB=
\RE \sum\nolimits^{n}_{i,j=0} a_{ij}\overline{b_{ij}}
\label{eq5.10}
\end{equation}
and the Hilbert -- Schmidt norm 
\begin{equation}
\Vert A\Vert =(\Tr A^2)^{1/2} =  \left(  \sum\nolimits^{n}_{i,j=0}
|a_{ij}|^2\right)^{1/2},
\label{eq5.11}
\end{equation}
where $\Tr A=\sum\nolimits^{n}_{i=0}a_{ii}$  denotes the trace of 
a matrix $A$. 
For the distance $\Vert A-B\Vert$ between two matrices $A,B\in \HH (\F^{n+1})$, we have 
\begin{equation}
\Vert A-B\Vert^2 =\Vert A\Vert^2 +\Vert B\Vert^2 -2 \langle A,B\rangle\, .
\label{eq5.12}
\end{equation}
Thus, $\HH (\Ff^{n+1})$ can be thought of as the $m$-dimensional Euclidean space. 

If $\Ff\ne \Oo$, the orthogonal projector  
$\Pi_{\mathbf a}\in \HH (\Ff^{n+1})$ onto
$p(\mathbf a)=\mathbf a \Ff$, $\mathbf a=(a_0,\dots,a_n)\in \Ff^{n+1}$,
$\vert\mathbf a\vert=1$, can be given by $\Pi_{\mathbf a}=\mathbf a (\mathbf a,\cdot)$
or as the $(n+1)\times (n+1)$ matrix  with entries
$(\Pi_{\mathbf a})_{i,j}=a_i\bar a_j$, $0\le i,j\le n$. 
Therefore, the projective space 
\eqref{eq5.5} can be written as follows
\begin{equation}
\Ff P^n=\{ \Pi\in \HH (\F^{n+1}): \Pi^2=\Pi, \, \,  \Tr \Pi =1\} .
\label{eq5.13}
\end{equation}
One can easily check that for such projectors the inner products \eqref{eq5.3}
and \eqref{eq5.10} are related by 
$\langle \Pi_{\mathbf a},\Pi_{\mathbf b}\rangle =\vert (\mathbf a,\mathbf b)\vert^2$,
see \cite[Eq. (2.1)]{14},
and the group of isometries $U(n+1,\F)$ acts on such projectors by
the formula $g(\Pi)=g\Pi g^{-1}$, $g\in U(n+1,\Ff)$. 

For the octonionic projective plane $\bO P^2$ a similar model is also true.
A detailed discussion of this model can be found
in \cite{3, 7, 20}, including an explanation why octonionic projective
spaces $\bO P^n$ do not exist if $n>2$.
In this model one puts by definition 
\begin{equation}
\Oo P^2=\{ \Pi \in \HH (\Oo^3): \Pi^2=\Pi, \, \,  \Tr \Pi=1\}.
\label{eq5.14}
\end{equation}
The formulas \eqref{eq5.13} and \eqref{eq5.14} are quite similar. 
One can check that each matrix in \eqref{eq5.14} can be written as 
$\Pi_{\mathbf a}\in \Oo P^2$ for a triple $\mathbf a=(a_0,a_1,a_2)\in 
\Oo^3$, where 
$(\Pi_{\mathbf a})_{i,j}=a_i\bar a_j$, 
$0\le i,j\le 2$, 
$\vert\mathbf a\vert^2=\vert a_0\vert^2+\vert a_1\vert^2+\vert a_2\vert^2=1$, 
and additionally  
$(a_0a_1)a_2=a_0(a_1a_2)$, see  \cite[Lemma 14.90]{20}. The additional condition 
means that the subalgebra in $\Oo$ generated by the elements $a_0,a_1,a_2$
is associative. Using this fact, one can show that $\Oo P^2$  is
a 16-dimensional compact connected Riemannian manifold, see \cite{3, 7, 20}. 

The group of nondegenerate linear transformations $g$ of the space 
$\HH(\Oo^3)$ 
preserving the squares  $g(A^2)=g(A)^2$, $A\in \HH(\Oo^3)$, is isomorphic to
the 52-dimensional exceptional Lie group  $F_4$. This group also preserves 
the trace, inner product \eqref{eq5.10} and norm \eqref{eq5.11} of matrices 
$A\in \HH (\Oo^3)$. 
The group $F_4$ is transitive on  $\Oo P^2$, and the stabilizer of a point is 
isomorphic to the spinor group $\Spin (9)$, 
see  \cite[Lemma 14.96 and Theorem 14.99]{20}.  
Hence, $\Oo P^2=F_4/\Spin (9)$ is a homogeneous space, and one can prove that
$\Oo P^2$ is a two-point homogeneous space. 


Now we wish to describe the structure of geodesics in projective spaces. 
Such a description can be easily given in terms
of the models \eqref{eq5.13} and \eqref{eq5.14}. It is known, 
see \cite{7, 21, 36}, 
that all geodesics on a two-point homogeneous space $Q(d,d_0)$ are closed
and homeomorphic to the unit circle.  The group of isometries is transitive
on the set of geodesics and the stabilizer of a point is transitive
on the set of geodesics passing through this point. Therefore,
all geodesics have the same length $2\pi$ (under the normalization \eqref{eq1.1}).

The inclusions $\Rr\subset \Cc\subset \Hh\subset \Oo$ induce the following inclusions 
of the corresponding projective spaces 
\begin{equation}
\Ff_1P^{n_1} \subseteq \Ff P^n, \quad \Ff_1\subseteq \Ff, \quad n_1\le n.
\label{eq5.15}  
\end{equation}
Furthermore, the subspace $\Ff_1P^{n_1}$ is a geodesic submanifold 
in $\Ff P^n$,
see \cite[Sec.~3.24]{7}. Particularly, the real projective line $\Rr P^1$ is 
homeomorphic to the unit circle $S^1$ and can be embedded as a geodesic into
all projective spaces $\Ff P^n$, 
\begin{equation}
S^1 \approx \Rr P^1 \subset \Ff P^n,
\label{eq5.16}
\end{equation}
see \cite[Proposition 3.32]{7}. In \eqref{eq5.16} $n=2$ if $\Ff=\Oo$. These facts can also be immediately derived from a general description of geodesic submanifolds in Riemannian symmetric spaces, see 
\cite[Chap. VII, Corollary 10.5]{21}.

Using the models \eqref{eq5.13} and \eqref{eq5.14}, we can write the real 
projective line $\Rr P^1$ as the following set of $2\times 2$ matrices: 
\begin{equation}
\Rr P^1=\{\zeta (u), u\in \Rr /\pi \bZ\} ,
\label{eq5.17}
\end{equation}
where
\begin{equation*}
\zeta (u)  \!=\! 
\begin{pmatrix}
\cos^2 u & \sin u \cos u  \\  \sin u \cos u & \sin^2 u
\end{pmatrix} \! 
\\
=\!
\begin{pmatrix}
\cos u & -\sin u \\ \sin u  & \cos u 
\end{pmatrix}    
   \begin{pmatrix}
1 & 0 \\ 0 & 0 
\end{pmatrix}
\begin{pmatrix}
\cos u & \sin u \\ \sin u & \cos u
\end{pmatrix}.
\end{equation*}   
For each $u\in \Rr$ the matrix $\zeta (u)$ is an orthogonal projector onto   
the one-dimensional subspace $x\Rr$, $x=(\cos u, \sin u)\in S^1$. 
The embedding $\Rr P^1$ into $\Ff P^n$ can be written as the following set of 
$(n+1)\times (n+1)$ matrices 
\begin{equation}
Z= \{ Z(u), u\in \Rr /\pi \Zz\} \subset \Ff P^n,
\label{eq5.18}
\end{equation}
where
\begin{equation*}
Z(u)= 
\begin{pmatrix}
\zeta (u) & 0_{n-1,2}  \\  0_{2, n-1}  & 0_{n-1,n-1}
\end{pmatrix},
\end{equation*}
and $0_{k,l}$ denotes the zero matrix of size $k\times l$. 
The set of matrices \eqref{eq5.18}  is a geodesic in $\Ff P^n$. 
All other geodesics are of the form $g(Z)$, where $g\in G$ is an isometry
of the space $\F P^n$. 
The parameter $u$ in \eqref{eq5.18} and the geodesic distance $\theta$ on the space 
$\F P^n$ are related by 
\begin{equation}
\theta (Z(u), Z(0)) =2 \vert u\vert, \quad - \pi/2 <u\le  \pi/2,        
\label{eq5.19}
\end{equation}
and for all $u\in \Rr$ this formula can be extended by periodicity.
In particular, we have
\begin{equation}
\theta (Z(v), Z(-v)) = 4v,  \quad 0\le v \le \pi/4. 
\label{eq5.20}
\end{equation}
The relation \eqref{eq5.20} will be needed in the next section.

Now, we define the chordal distance on projective spaces.
The formulas \eqref{eq5.13}, \eqref{eq5.14} and \eqref{eq5.11} imply 
\begin{equation}
\Vert \Pi\Vert^2 = \Tr \Pi^2 = \Tr \Pi =1.
\label{eq5.21}
\end{equation}
for any $\Pi\in \Ff P^n$. Therefore, the projective spaces $\Ff P^n$,
defined by \eqref{eq5.13} and \eqref{eq5.14}, are submanifolds in the unit sphere 
\begin{equation}
\Ff P^n\subset S^{m-1} =\{ A\in \HH (\Ff^{n+1}) :\Vert A\Vert =1\} \subset
\HH (\Ff^{n+1})\approx \bR^m.
\label{eq5.22}
\end{equation}
In fact, the formula \eqref{eq5.22} defines an embedding of $\Ff P^n$ into 
the $(m-2)$-dimensional sphere,
the intersection of the sphere $S^{m-1}$ and the hyperplane 
in $\HH(\Ff^{n+1})$ given by $\Tr A=1$.

The chordal distance $\tau (\Pi_1,\Pi_2)$  between 
$\Pi_1,\Pi_2\in \Ff P^n$ is defined as the Euclidean distance \eqref{eq5.12}:
\begin{equation}
\tau (\Pi_1,\Pi_2)= \frac{1}{\sqrt{2}} \Vert \Pi_1-\Pi_2\Vert =
(1-\langle \Pi_1,\Pi_2\rangle )^{1/2}.
\label{eq5.23}
\end{equation}
The coefficient $1/\sqrt{2}$ is chosen to satisfy $\diam (\Ff P^n, \tau)=1$. 

It follows from \eqref{eq5.23} that  $\tau (g(\Pi_1)$, 
$g(\Pi_2))=\tau (\Pi_1,\Pi_2)$  
for all isometries $g\in G$ of the space $\Ff P^n$.
Since $\Ff P^n$ is a two-point homogeneous space, for any 
$\Pi_1,\Pi_2\in \Ff P^n$ with $\theta (\Pi_1,\Pi_2)=2u$, 
$0\le u\le  \pi /2$, there exists an isometry $g\in G$, such that 
$g(\Pi_1)=Z(u)$, $g(\Pi_2)=Z(0)$.  From   \eqref{eq5.23}, \eqref{eq5.18} and \eqref{eq5.17}, 
we obtain 
$
\tau (Z(u),Z(0)) =\sin u =\sin \frac12\theta (\Pi (u),\Pi(0)).
$
Therefore, 
$\tau (\Pi_1,\Pi_2)= \sin \frac12 \theta (\Pi_1,\Pi_2)$,
as it was defined before in \eqref{eq2.4}.
Notice also that pairs of antipodal points $\Pi_+,\Pi_-\in \Ff P^n$ 
(with  $\theta (\Pi_+,\Pi_-)=\pi$ and $\tau (\Pi_+,\Pi_-)=1$) 
can be characterized by the orthogonality condition 
$\langle \Pi_+,\Pi_-\rangle =0$, see \eqref{eq5.23}.

\section{Proof of Theorem 1.1}\label{sec3}

The proof of Theorem~\ref{thm1.1} relies on the following special representation
of the symmetric difference metric \eqref{eq1.13}. 
\begin{lemma}\label{lem6.1} For a distance--invariant space $\MMM$, we have  
\begin{equation}
\theta^{\Delta} (\xi,y_1,y_2)=\frac12 \int_{\M} | \sigma
(\theta(y_1,y)) - \sigma (\theta (y_2,y)) | \, \dd\mu (y)
\label{eq6.1}
\end{equation}
with the non-increasing function 
$\sigma(r)= \xi ([r,\pi ]) = \int^{\pi}_{r}  \, \dd \xi (u)$.

In particular, for a homogeneous space $Q=Q(d,d_0)$
and the measure $\dd\xi^{\natural}(r)=\sin r\,\dd r,\, r\in [0,\pi]$, we have   
\begin{equation}
\theta^{\Delta} (\xi^{\natural}, y_1,y_2)= \int_Q 
|\tau (y_1,y)^2-\tau (y_2,y)^2| \, \dd\mu (y),
\label{eq6.3}
\end{equation}
where $\tau$ is the chordal metric \eqref{eq2.4} on $Q(d,d_0)$. 
\end{lemma}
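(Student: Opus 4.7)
The plan is to start from the explicit $L_1$ expression \eqref{eq1.15*} for $\theta^{\Delta}(\xi,y_1,y_2)$, apply Fubini to swap the $\dd\xi$ and $\dd\mu$ integrations, evaluate the inner $\dd\xi$-integral explicitly using the symmetry relation \eqref{eq1.16*}, and recognize the resulting quantity as $|\sigma(\theta(y_1,y))-\sigma(\theta(y_2,y))|$. Specializing to $\dd\xi^{\natural}$ then reduces to a one-line computation.

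First I would rewrite the integrand of \eqref{eq1.15*} using \eqref{eq1.16*}: with $\chi_0$ the indicator of $(0,\infty)$, the integrand is $|\chi_0(r-\theta(y_1,y))-\chi_0(r-\theta(y_2,y))|$. For fixed $y\in\M$, set $a=\theta(y_1,y)$ and $b=\theta(y_2,y)$; assume without loss of generality $a\le b$. Then $\chi_0(r-a)-\chi_0(r-b)$ equals $1$ precisely when $r\in(a,b]$ (and $0$ elsewhere), so
\begin{equation*}
\int_{\III}|\chi_0(r-a)-\chi_0(r-b)|\,\dd\xi(r)=\xi\bigl((a,b]\bigr)=\sigma(a)-\sigma(b)=|\sigma(a)-\sigma(b)|,
\end{equation*}
where $\sigma(r)=\xi([r,\pi])$ is manifestly non-increasing. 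Applying Fubini to \eqref{eq1.15*} (the integrand is nonnegative, so this is unconditional) and substituting the above yields \eqref{eq6.1}.

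For the special case $\dd\xi^{\natural}(r)=\sin r\,\dd r$ on $[0,\pi]$, one computes directly
\begin{equation*}
\sigma(r)=\int_r^{\pi}\sin u\,\dd u=1+\cos r=2\cos^2\tfrac{r}{2}=2-2\sin^2\tfrac{r}{2}.
\end{equation*}
Since the chordal metric on $Q=Q(d,d_0)$ is $\tau(y_i,y)=\sin\tfrac12\theta(y_i,y)$ by \eqref{eq2.4}, one finds
\begin{equation*}
\sigma(\theta(y_1,y))-\sigma(\theta(y_2,y))=2\bigl(\tau(y_2,y)^2-\tau(y_1,y)^2\bigr),
\end{equation*}
so the factor $\tfrac12$ in \eqref{eq6.1} cancels the factor $2$ and gives \eqref{eq6.3}.

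There is no real obstacle here: the only subtle step is handling the boundary point $r=b$ in the set $(a,b]$, which contributes measure zero to any $\xi$ that has no atoms — and in any event contributes $0$ to $|\chi_0(r-a)-\chi_0(r-b)|$ at $r=b$, so the ambiguity is harmless. Distance-invariance of $\MMM$ is used implicitly through \eqref{eq1.16*}; one should also note that convergence of the original double integral, guaranteed by the hypotheses of Proposition~\ref{prop1.1}, justifies Fubini.
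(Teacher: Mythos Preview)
Your argument is correct and follows essentially the same route as the paper, which starts from \eqref{eq1.13}--\eqref{eq1.17*} and reaches \eqref{eq6.1} via the identity $\sigma(\theta_1)+\sigma(\theta_2)-2\sigma(\max\{\theta_1,\theta_2\})=|\sigma(\theta_1)-\sigma(\theta_2)|$ rather than reading off the interval measure directly; the specialization to $\xi^{\natural}$ is identical. One small slip in your closing remark: at $r=b$ (with $a<b$) one has $\chi_0(b-a)-\chi_0(0)=1-0=1$, not $0$, so the support really is $(a,b]$, and strictly speaking $\sigma(a)-\sigma(b)=\xi([a,b))$ rather than $\xi((a,b])$; this discrepancy matters only when $\xi$ has atoms, the paper is equally informal on this point, and for $\xi^{\natural}$ it is irrelevant.
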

Lemma~\ref{lem6.1} was proved earlier in \cite[Lemma 2.1]{30}. Here this 
result is given in a form adapted to the chordal metric.
For the spheres $S^d$, a formula similar to \eqref{eq6.3} was 
given earlier in \cite[Lemma 2.4]{8}.

\begin{proof}[Proof of Lemma~3.1] For brevity, we write $\theta (y_1,y)=\theta_1$ and 
$\theta(y_2,y)=\theta_2$. Using \eqref{eq1.13}, \eqref{eq1.16*} and \eqref{eq1.17*}, 
we obtain 
\begin{align}
&\theta^{\Delta} (\xi ,y_1,y_2)
\notag
\\
& = \frac12 \int_{\M} \left( \int^{\pi}_{0} (\chi_0  
(r-\theta_1) +\chi_0 (r-\theta_2) -2\chi_0 (r-\theta_1)\chi_0
(r-\theta_2))  \dd\xi(r)\,  \right) \, \dd\mu (y)
\notag
\\
& = \frac 12 \int_{\M} (\sigma (\theta_1) +\sigma (\theta_2) 
-2\sigma (\max \{ \theta_1,\theta_2\} )) \, \dd\mu (y).
\label{eq6.4}
\end{align}
Since $\sigma$ is a non-increasing function, we have 
\begin{equation}
2\sigma (\max\{ \theta_1,\theta_2\})\! =\! 2\min \{ \sigma (\theta_1),\sigma
(\theta_2)\} 
\!= \!\sigma (\theta_1)\!+\!\sigma (\theta_2)\! -\!|\sigma (\theta_1)\!-\!\sigma 
(\theta_2)| .
\label{eq6.5}
\end{equation}
Substituting \eqref{eq6.5} into \eqref{eq6.4}, we obtain \eqref{eq6.1}. 

If  $\dd\,\xi^{\natural}(r)=\sin r\,\dd r$, then the corresponding 
$\sigma (r)= 2-2(\sin r/2)^2$. Substituting this expression 
into \eqref{eq6.1} and using the definition \eqref{eq2.4}, we obtain \eqref{eq6.3}. 
\end{proof}

\begin{proof}[Proof of Theorem~\ref{thm1.1} for spheres]  By \eqref{eq2.6*}, 
we have
\begin{align}
& \tau (y_1,y)^2-\tau (y_2,y)^2 =\frac14 (\|y_1-y\|^2-\|y_2-y\|^2) 
\notag
\\
& = \frac12 (y_2-y_1,y) =\tau (y_1,y_2) (x,y), \quad y_1,y_2\in S^d,
\label{eq6.6}
\end{align}
where $x=\Vert y_2-y_1\Vert^{-1}(y_2-y_1)\in S^d$. Substituting 
\eqref{eq6.6} into \eqref{eq6.3}, we obtain 
\begin{equation}
\theta^{\Delta}(\xi^{\natural},y_1,y_2)= \tau (y_1,y_2)
\int_{S^d} |(x,y)| \, \dd\mu (y).
\label{eq6.7}
\end{equation}
The integral in \eqref{eq6.7} is independent of $x\in S^d$.
This proves the equality \eqref{eq2.8} for $S^d$ with the constant 
\begin{equation}
\gamma (S^d) =\left( \int\nolimits_{S^d}  |(x,y)| \,\dd\mu (y)\right)^{-1}.
\label{eq6.7*} 
\end{equation}
This completes the proof.
\end{proof}
The integral \eqref{eq6.7*} can be easily calculated
to obtain \eqref{eq1.3***}, see \cite{8, 11}.

\begin{proof}[Proof of Theorem~1.1 for real projective spaces]
The space $\Rr P^d=S^d/S^0,\, S^0=\{1,-1\}$ consists of the one dimensional 
subspaces $y=\mathbf a\Rr\subset \Rr ^{d+1}$, where $\mathbf a\in S^d$, 
and $\mathbf a$ and $-\mathbf a$ are identified. 
Consider two subspaces
$y_1=\mathbf a_1\Rr, \,\, y_2=\mathbf a_2\Rr\subset \Rr P^d$, 
and let $(\mathbf a_1,\mathbf a_2)\ge 0$. 
The definitions \eqref{eq2.4} and \eqref{eq5.6} imply
$$
\tau(y_1,y_2)^2=(\sin \frac12 \theta(y_1,y_2))^2 = 
1-(\mathbf a_1,\mathbf a_2)^2 .
$$ 
Putting $y=\mathbf a\Rr$, we obtain
\begin{align}
& \tau (y_2,y)^2-\tau (y_1,y)^2 = 
(\mathbf a_1,\mathbf a)^2 -(\mathbf a_2,\mathbf a)^2
\notag
\\
&=[(\mathbf a_1,\mathbf a) +(\mathbf a_2,\mathbf a)]\,
[(\mathbf a_1,\mathbf a) -(\mathbf a_2,\mathbf a)]=
(\mathbf a_1+\mathbf a_2,\mathbf a)\,(\mathbf a_1-\mathbf a_2,\mathbf a)
\notag
\\
& = 2\sin\frac12\theta(y_1,y_2) \,
(\mathbf a_+,\mathbf a)\,(\mathbf a_-,\mathbf a)=
2\tau (y_1,y_2)\, (\mathbf a_+,\mathbf a)\,(\mathbf a_-,\mathbf a),
\label{eq6.6*}
\end{align}
where $\mathbf a_1+\mathbf a_2=2\cos\frac{1}{4}\theta(y_1,y_2)\,\mathbf a_+$ 
and
$\mathbf a_1-\mathbf a_2=2\sin\frac{1}{4}\theta(y_1,y_2)\,\mathbf a_-$. 
Here $\mathbf a_+,\mathbf a_- \in S^d$ and 
$(\mathbf a_+,\mathbf a_-)=0$. 
The corresponding
mutually orthogonal subspaces 
$y_+=\mathbf a_+\Rr,\,\, y_-=\mathbf a_-\Rr$ are antipodal points
in $\Rr P^d:\,\, \theta(y_+,y_-)=\pi$. 

Substituting \eqref{eq6.6*} into \eqref{eq6.3}, we obtain
\begin{equation}
\theta^{\Delta}(\xi^{\natural},y_1,y_2)=2 \tau (y_1,y_2)
\int_{S^d/S^0} \,\vert (\mathbf a_+,\mathbf a)\,(\mathbf a_-,\mathbf a)\vert \, 
\dd\mu (\mathbf a).
\label{eq6.7**}
\end{equation}
Since $\Rr P^d$ is a two-point homogeneous space,
the integral in \eqref{eq6.7**} is independent of the choice of mutually
orthogonal unit vectors $\mathbf a_+, \mathbf a_-\in S^d$.
This proves the equality \eqref{eq2.8} for $\Rr P^d$ with the constant 
\begin{align}
\gamma (\Rr P^d) &=\left( 2 \int\nolimits_{S^d/S^0}  \,
\vert (\mathbf a_+,\mathbf a)\,(\mathbf a_-,\mathbf a)\vert \,
\dd\mu (\mathbf a)\right)^{-1}
\notag
\\
&=\left(2\, \omega_d ^{-1} \int\nolimits_{S^d}  \,
\vert (\mathbf a_+,\mathbf a)\,(\mathbf a_-,\mathbf a)\vert \,
\dd \mathbf a\right)^{-1} ,
\label{eq6.7***} 
\end{align}
where $\dd\mathbf a$ denotes the standard surface measure on $S^d$ and $\omega_d$
is the full surface measure of $S^d$. 
This completes the proof.
\end{proof}

The integral in \eqref{eq6.7***} can be easily calculated  
to obtain \eqref{eq1.34} for $\Rr P^d$.

\begin{proof}[Proof of Theorem~1.1 for general projective spaces]
We write $\Pi_1,\Pi_2,\Pi$ for points in the models of projective spaces \eqref{eq5.13}
and \eqref{eq5.14}.
With this notation, the relation \eqref{eq6.3} takes the form  
\begin{equation}
\theta^{\Delta}(\xi^{\natural},\Pi_1,\Pi_2)= \int_{\Ff P^n}
|\tau (\Pi_1,\Pi)^2 -\tau (\Pi_2,\Pi)^2 | \, \dd\mu (\Pi)\, .
\label{eq6.8}
\end{equation} 

Since $\Ff P^n$ is a two-point homogeneous space, for any
$\Pi_1,\Pi_2 \in \Ff P^n$ 
with $\theta (\Pi_1,\Pi_2)=4v$, $0\le v\le \pi/4$, there exists 
an isometry $g\in G$, 
such that $g(\Pi_1)=Z(v)$, $g(\Pi_2)=Z(-v)$, see \eqref{eq5.20}. Therefore,
\begin{equation}
\int_{\Ff P^n} |\tau (\Pi_1,\Pi)^2-\tau (\Pi_2,\Pi)^2 | \, \dd\mu 
(\Pi)   
\\
=\int_{\Ff P^n}|\tau (Z(v),\Pi)^2-\tau (Z(-v),\Pi)^2|\,\dd\mu (\Pi).
\label{eq6.9}
\end{equation}
From the definition \eqref{eq5.23}, we obtain 
\begin{align}
 \tau (Z(v), \Pi)^2\!-\!\tau (Z(-v), \Pi)^2
&\! =\!  \frac12\, (\,\| Z(v)-\Pi\|^2\! -\! \| Z(-v)\!-\!\Pi \|^2\,) \notag
\\
& = \langle\, Z(v)-Z(-v), \Pi \, \rangle .
\label{eq6.10}
\end{align}
The formulas \eqref{eq5.17} and \eqref{eq5.18} imply 
$$
Z(v)-Z(-v)=
\begin{pmatrix}
\zeta(v) -\zeta(-v) & 0_{n-1,2} \\
0_{2,n-1}  &  0_{n-1,n-1}
\end{pmatrix}
$$
and 
$$
\zeta(v)-\zeta(-v)=
\begin{pmatrix}
0  &  \sin 2v \\ \sin 2v & 0
\end{pmatrix}
= (\sin 2v )\,(\zeta_+ -\zeta_-), 
$$
where 
$$
\zeta_+= \frac12 
\begin{pmatrix}
1 & 1 \\ 1 & 1
\end{pmatrix} , \quad \zeta_-=\frac12 
\begin{pmatrix}
1  &-1 \\ -1 & 1
\end{pmatrix} .
$$
Therefore, 
\begin{equation}
Z(v)-Z(-v) =(\sin 2v)\, (Z_+-Z_-),
\label{eq6.11}
\end{equation}
where
$$
Z_{\pm} = 
\begin{pmatrix}
\zeta_{\pm} & 0_{n-1,2} \\  0_{2,n-1}  & 0_{n-1,n-1} 
\end{pmatrix} . 
$$
We have $Z^*_{\pm}=Z_{\pm}$, $Z^2_{\pm}=Z_{\pm}$,
$\Tr Z_{\pm}=1$, Therefore, $Z_{\pm}\in \F P^n$, and 
$\langle Z_+,Z_-\rangle =0$. This means that $Z_+$ and  $Z_-$ are antipodal 
points in $\Ff P^n$. 
Using \eqref{eq2.4}, we can write
$$
\tau (\,\Pi_1,\Pi_2\,)= \tau (Z(v),  Z(-v)) =\sin 2v,
$$
and the equality \eqref{eq6.11} takes the form
\begin{equation}
Z(v)-Z(-v)=\tau (\Pi_1,\Pi_2)\, (Z_+-Z_-).
\label{eq6.12}
\end{equation}
Substituting \eqref{eq6.12} into \eqref{eq6.10}, we find that   
\begin{equation}
\tau (Z(v),\Pi)^2 -\tau (Z(-v),\Pi)^2
= \tau (\Pi_1,\Pi_2)\,\langle\, Z_+-Z_-,\Pi \,\rangle  .
\label{eq6.13}
\end{equation}
Substituting \eqref{eq6.13} into \eqref{eq6.9} and using \eqref{eq6.8}, we obtain 
\begin{equation}
\theta^{\Delta}(\xi^{\natural}, \Pi_1,\Pi_2)= \tau
(\Pi_1,\Pi_2) \,\theta^{\Delta} (\xi^{\natural}, Z_+, Z_-),
\label{eq6.14}
\end{equation}
where 
\begin{equation}
\theta^{\Delta}(\xi^{\natural}, Z_+,Z_-)=
\int_{\Ff P^n}  |\langle\, Z_+-Z_-,\Pi\,\rangle | \, \dd\mu (\Pi).
\label{eq6.15}
\end{equation}
The integral \eqref{eq6.15} is independent of $\Pi_1$ and $\Pi_2$. 
This proves the equality \eqref{eq2.8} for  $\Ff P^n$ with the constant
\begin{equation}
\gamma (\Ff P^n) = \left( \,\int\nolimits_{\Ff P^n} \vert \langle\, Z_+-Z_-,
\Pi \,\rangle \vert \, \dd\mu (\Pi) \right)^{-1}. 
\label{eq6.16}
\end{equation}
Notice that in this formula any pair of antipodal points in $\Ff P^n$ can be taken 
instead of $Z_+,Z_-$.
The proof of Theorem~\ref{thm1.1} is complete. 
\end{proof}


\section{Proof of Theorem 1.2}\label{sec4}


The zonal spherical functions $\phi_l$ for the spaces $Q=Q(d,d_0)$ are 
eigenfunctions of the radial part of the Laplace--Beltrami operator on $Q$ 
and can be given explicitly, see 
\cite[p.~178]{19}, 
\cite[Chap.~V, Theorem~4.5]{22},
\cite[pp.~514--512, 543--544]{24}, \cite[Theorem~11.4.21]{37}.
We have
\begin{equation}
\phi_l(x_1,x_2) =
\phi_l(Q,x_1,x_2) = \frac{P^{(\alpha,\beta)}_l (\cos\theta(x_1,x_2))}
{P^{(\alpha,\beta)}_l (1)}, \quad l\ge 0,\,\, x_1, x_2 \in Q,
\label{eq4.1}
\end{equation}
where $P^{(\alpha,\beta)}_l (t),\, t\in [-1,1],$ are  Jacobi polynomials 
of degree $l$ with parameters
\begin{equation}
\alpha = d/2 - 1,\quad \beta  = d_0/2  -1,\quad \alpha ,\,\beta \ge -1/2.
\label{eq8.24}
\end{equation}

A detailed consideration of Jacobi polynomials can be found in \cite{1*, 2*, 34}.
They can be given by Rodrigues' formula 
\begin{equation}
P^{(\alpha,\beta)}_l(t) =
\frac{(-1)^l}{2^ll!} (1-t)^{-\alpha} (1+t)^{-\beta}
\frac{d^l}{dt^l} 
\left\{ (1-t)^{l+\alpha} (1+t)^{l+\beta} \right\}.
\label{eq9.11}
\end{equation}
We also have the bound
\begin{equation}
\vert P^{(\alpha,\beta)}_l(t)\vert 
\le P^{(\alpha,\beta)}_l(1) =
\begin{pmatrix} \alpha +l \\ l  \end{pmatrix}  
= O( l^{\alpha}),
\quad t\in [-1,1], 
\label{eq8.23}
\end{equation}
see \cite[Theorem~7.32.1]{34}.

Jacobi polynomials form a complete orthogonal system in the $L_2$-space on 
the segment $[-1,1]$ with
the weight $(1-t)^{\alpha}(1+t)^{\beta}$. We have the orthogonality relations,
\begin{align}
\int^1_{-1}
P^{(\alpha,\beta)}_l(t) P^{(\alpha,\beta)}_{l'} (t)
(1-t)^{\alpha}(1+t)^{\beta} \, \dd t = \delta_{ll'}\,2^{\alpha+\beta+1}\,M_l^{-1}\,, 
\label{eq8.25}
\end{align}                                               
where $\delta_{ll'}$ is Kronecker's symbol and
\begin{equation}
M_l=(2l+\alpha+\beta+1) 
\frac{\Gamma (l+1)\Gamma (l+\alpha+\beta+1)}{\Gamma(l+\alpha+1)
\Gamma (l+\beta+1)}\,= \, O(l),
\label{eq8.26}
\end{equation}
see \cite[Eq.~(4.3.3)]{34}. 
Notice that the asymptotic approximations in \eqref{eq8.23} and \eqref{eq8.26}
follow easily from Stirling's formula.

Using the orthogonality relations \eqref{eq8.25},
we obtain the following formal expansion 
\begin{equation}
f(t)=\sum\nolimits_{l\ge 0}\,2^{-\alpha-\beta-1}\,M_l
\, c_l\,P^{(\alpha,\beta)}_l(t),
\label{eq0.17**}
\end{equation}
for an integrable function $f(t),\,t\in [-1,1]$, where
\begin{equation}
c_l=\int^1_{-1}f(t)\,
(1-t)^{\alpha}(1+t)^{\beta}\, P^{(\alpha,\beta)}_l(t)\, \dd t .
\label{eq0.18**}
\end{equation}
If all derivatives $f^{(l)}(t)$ exist, and 
$f^{(l)}(t)\,(1-t)^{\alpha +l}(1+t)^{\beta +l}$ vanish at $t=\pm 1$ for all $l$, 
then substituting 
Rodrigues' formula \eqref{eq9.11} into \eqref{eq0.18**} and integrating
$l$ times by part, we obtain 
\begin{equation}
c_l=\frac{1}{2^ll!}\int^1_{-1}f^{(l)}(t)\,(1-t)^{\alpha +l}(1+t)^{\beta +l}\,\dd t.
\label{eq0.19**}
\end{equation}

In what follows, we always assume that the parameters $\alpha,\beta$ and 
the dimensions $d$, $d_0$ are related by \eqref{eq8.24}. We shall also use 
the well-known formulas for the beta function 
\begin{align}
B(a,b)= \frac{\Gamma (a)\Gamma (b)}{\Gamma (a+b)}&=
\int^{\pi}_0(\sin\frac{1}{2}u)^{2a-1}(\cos \frac{1}{2}u)^{2b-1}\,\dd u 
\notag
\\
&=2^{1-a-b}\int_{-1}^{1} (1-t)^{a-1}\,(1+t)^{b-1}\,\dd t \, ,
\label{eq0.1}
\end{align}
and the following notation 
\begin{equation}
(a)_0 = 1,\, (a)_l = a(a+1)\dots (a+l-1)=\frac{\Gamma(\alpha +l)}{\Gamma(\alpha)}
\label{eq8.26*}
\end{equation}
for the falling factorial, see \cite[Sec.10.7, Eq.(11)]{2*}.
\begin{lemma}\label{lem4.1} 
For any space $Q=Q(d,d_0)$, 
the chordal metric $\tau$
has the following zonal spherical function expansion
\begin{equation}
\tau(y_1,y_2)=\frac12\sum\nolimits_{l\ge 1}\, M_l\,C_l\,\left[ 1-\phi_l(Q,x_1,x_2)\right],
\label{eq9.16**}
\end{equation}
where 
\begin{equation}
C_l = B(\alpha +3/2, \beta + l +1)\, 
\Gamma (l+1)^{-1}\, (1/2)_{l-1}\, P^{(\alpha,\beta)}_l (1) \, .
\label{eq0.16**}
\end{equation}
The series \eqref{eq9.16**} converges absolutely and uniformly.

\end{lemma}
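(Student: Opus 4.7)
The plan is to realize $\tau(y_1,y_2)$ as a function $f(t)=\sqrt{(1-t)/2}$ of the single variable $t=\cos\theta(x_1,x_2)$, expand $f$ in Jacobi polynomials $P_l^{(\alpha,\beta)}$ with $(\alpha,\beta)$ as in \eqref{eq8.24}, and then rewrite this expansion in terms of the zonal spherical functions $\phi_l(Q,x_1,x_2)=P_l^{(\alpha,\beta)}(t)/P_l^{(\alpha,\beta)}(1)$. Since $f(1)=0$, the constant term can be eliminated, yielding the form $\tfrac12\sum_{l\ge 1}M_l C_l[1-\phi_l]$.

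\emph{Step 1: compute $c_l$.} Apply the Jacobi-coefficient formula \eqref{eq0.17**}--\eqref{eq0.18**} to $f$. For $l\ge 1$, all derivatives $f^{(l)}(t)$ are constant multiples of $(1-t)^{1/2-l}$; since $\alpha\ge -1/2$ and $l\ge 1$, the factor $(1-t)^{\alpha+l}(1+t)^{\beta+l}$ kills the boundary contributions of the $l$-fold integration by parts, so the Rodrigues version \eqref{eq0.19**} of \eqref{eq0.18**} is legitimately available. A direct differentiation gives
\begin{equation*}
f^{(l)}(t)=-\frac{(2l-3)!!}{2^l\sqrt{2}}\,(1-t)^{1/2-l}\qquad (l\ge 1),
\end{equation*}
with the convention $(-1)!!:=1$. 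Substituting into \eqref{eq0.19**} and applying the beta-integral formula \eqref{eq0.1} with parameters $\alpha+3/2$ and $\beta+l+1$ reduces $c_l$ to
\begin{equation*}
c_l=-\frac{(2l-3)!!}{l!}\,2^{\alpha+\beta-l}\,B(\alpha+3/2,\beta+l+1).
\end{equation*}

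\emph{Step 2: rearrange into the target form.} Use the elementary identity $(2l-3)!!=2^{l-1}(1/2)_{l-1}$ (by induction on $l$) to rewrite the Jacobi coefficient $\tilde a_l:=2^{-\alpha-\beta-1}M_l c_l$ so that $\tilde a_l P_l^{(\alpha,\beta)}(1)$ collects, after cancellation of the $2$-powers and the gamma-function factors, into the form $-\tfrac12 M_l C_l$ with $C_l$ as in \eqref{eq0.16**}. Writing the Jacobi expansion of $f$ as
\begin{equation*}
f(t)=\tilde a_0+\sum_{l\ge 1}\tilde a_l P_l^{(\alpha,\beta)}(1)\,\phi_l(Q,x_1,x_2)
\end{equation*}
and using $f(1)=0$ to substitute $\tilde a_0=-\sum_{l\ge 1}\tilde a_l P_l^{(\alpha,\beta)}(1)$ then produces \eqref{eq9.16**}.

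\emph{Step 3: convergence.} The classical bounds \eqref{eq8.23}, \eqref{eq8.26} give $P_l^{(\alpha,\beta)}(1)=O(l^{\alpha})$ and $M_l=O(l)$, while Stirling's formula yields $B(\alpha+3/2,\beta+l+1)=O(l^{-\alpha-3/2})$ and $(1/2)_{l-1}/\Gamma(l+1)=\Gamma(l-1/2)/(\Gamma(1/2)\Gamma(l+1))=O(l^{-3/2})$. Multiplying these estimates shows $M_l C_l=O(l^{-2})$. Since $|1-\phi_l|\le 2$, the series \eqref{eq9.16**} is majorized by a convergent numerical series, giving absolute and uniform convergence on $Q\times Q$; completeness of Jacobi polynomials in $L^2([-1,1],(1-t)^{\alpha}(1+t)^{\beta}\,dt)$ identifies the sum with $f(t)$.

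The main obstacle is the clean computation of $f^{(l)}(t)$ and the verification that \eqref{eq0.19**} applies at the endpoints $t=\pm 1$; once this reduces $c_l$ to a single beta integral, the rest is bookkeeping among gamma functions, powers of $2$, and Pochhammer symbols, together with a routine Stirling-type argument for convergence.
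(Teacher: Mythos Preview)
Your proof is correct and follows essentially the same route as the paper: expand $f(t)=\sqrt{(1-t)/2}$ in Jacobi polynomials via Rodrigues' formula \eqref{eq0.19**} and the beta integral \eqref{eq0.1}, use $f(1)=0$ to eliminate the constant term, and verify $M_lC_l=O(l^{-2})$ by Stirling. One harmless arithmetic slip: your displayed value of $c_l$ should carry $2^{\alpha+\beta-l+1}$ rather than $2^{\alpha+\beta-l}$ (the $2^{3/2}/\sqrt{2}=2$ was dropped), but your subsequent claim that $\tilde a_l P_l^{(\alpha,\beta)}(1)=-\tfrac12 M_lC_l$ is correct with the right constant, so the final expansion \eqref{eq9.16**} stands.
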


\begin{proof}[Proof] 
Applying the formulas \eqref{eq0.17**} -- \eqref{eq0.19**} to the function $f(t)=(1-t)^{1/2}$ and using
\eqref{eq0.1}, we obtain the expansion 
\begin{align}
(1-t)^{1/2}& =2^{1/2}\,\Gamma(\alpha +3/2)\times
\notag
\\
&\sum\nolimits_{l\ge 0}\,
\frac{(2l+\alpha +\beta +1)\,\Gamma(l+\alpha +\beta +1)\,(-1/2)_l}
{\Gamma(l+\alpha +1)\,\Gamma(l+\alpha +\beta +5/2)}  \, 
\,P^{(\alpha,\beta)}_l(t).
\label{eq4.17}
\end{align}

Taking into account  \eqref{eq8.23} and \eqref{eq8.26*}, and applying Stirling's approximation to the gamma functions in \eqref{eq4.17},  
we observe
that the coefficients in \eqref{eq4.17} are of the order $O(l^{-2})$.
Therefore, the series \eqref{eq4.17} converges absolutely and uniformly.

Since  $(-1/2)_0=1$ and $(-1/2)_l=-1/2\,(1/2)_{l-1}$ for $l\ge 1$,  the series 
\eqref{eq4.17} can be written as follows
\begin{equation}
\left(\frac{1-t}{2}\right)^{1/2} = M_0\,C_0 - \frac12
\sum\nolimits_{l\ge 1}\,M_l\,C_l\,
\frac{P^{(\alpha,\beta)}_l (t)}{P^{(\alpha,\beta)}_l (1)}\, ,
\label{eq0.17}
\end{equation}
where $C_0=B(\alpha +3/2, \beta +1)$ and $C_l, l\ge 1$ are given in \eqref{eq0.16**}.
Putting $t=1$, we find
\begin{equation}
M_0\,C_0 = \frac12
\sum\nolimits_{l\ge 1}\,M_l\,C_l\, .
\label{eq0.18}
\end{equation}
Combining \eqref{eq0.17} and \eqref{eq0.18}, we obtain
\begin{equation}
\left(\frac{1-t}{2}\right)^{1/2} = \frac12 
\sum\nolimits_{l\ge 1}\,M_l\,C_l\,
\left[1-\frac{P^{(\alpha,\beta)}_l (t)}{P^{(\alpha,\beta)}_l (1)}\right]\, .
\label{eq0.19}
\end{equation}
For $t=\cos\theta (x_1, x_2)$, the equality \eqref{eq0.19} coincides with \eqref{eq9.16**}.
\end{proof}

\emph{Remark~4.1}. The expansion \eqref{eq4.17} can be found 
in \cite[Sec.10.20, Eq.(3)]{2*}.
However, it should be noted that  
the linear co-factor $(2l+\alpha +\beta +1)$ 
in \eqref{eq4.17} 
is misprinted in \cite[Sec.10.20, Eq.(3)]{2*}
as $\Gamma(2l+\alpha +\beta +1)$.

\begin{lemma}\label{lem4.2} 
For any space $Q=Q(d,d_0)$ and any finite measure $\xi$ on $[0,\pi]$, 
the symmetric difference metric \eqref{eq1.13}
has the following zonal spherical function expansion

\begin{equation}
\theta^{\Delta}(\xi, y_1,y_2) =
B(d/2,d_0/2)^{-1} \sum\nolimits_{l\ge 1}\, l^{-2} M_lA_l(\xi) 
\left[ 1-\phi_l(Q,x_1,x_2)\right],
\label{eq9.16}
\end{equation}
where 
\begin{equation}
A_l(\xi)=\int^{\pi}_{0}  
(\sin\frac 12 r)^{2d}(\cos \frac12 r)^{2d_0}
\left\{ P^{(\alpha+1,\beta+1)}_{l-1} (\cos r)\right\}^2 
\, \dd \xi (r). 
\label{eq9.16*}
\end{equation}
The series \eqref{eq9.16} converges absolutely and uniformly.
\end{lemma}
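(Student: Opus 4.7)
The plan is to start from the distance-invariant representation \eqref{eq1.14*}, namely $\theta^{\Delta}(\xi,y_1,y_2) = \int_{\III}(v(r) - \mu(\BBB(y_1,r)\cap\BBB(y_2,r)))\,\dd\xi(r)$, and to expand the radial function $y\mapsto\chi(\BBB(y_1,r),y)$ in the zonal spherical basis with pole $y_1$. The addition formula on the two-point homogeneous space $Q$ — $\int_Q \phi_l(y_1,y)\phi_{l'}(y_2,y)\,\dd\mu(y) = \delta_{l,l'}h_l^{-1}\phi_l(y_1,y_2)$, with $h_l$ the dimension of the $l$-th joint eigenspace of the Laplace--Beltrami operator — gives the ``convolution'' identity
\begin{equation*}
\mu(\BBB(y_1,r)\cap\BBB(y_2,r)) = \sum\nolimits_{l\ge 0} h_l\,d_l(r)^2\,\phi_l(y_1,y_2), \qquad d_l(r) := \int_0^r \phi_l(r')\,v'(r')\,\dd r',
\end{equation*}
where $v'(r) = B(d/2,d_0/2)^{-1}(\sin\tfrac{1}{2} r)^{d-1}(\cos\tfrac{1}{2} r)^{d_0-1}$ is read off from \eqref{eq2.2} and I write $\phi_l(r')$ for the common value $\phi_l(y_1,y)$ at distance $\theta(y_1,y)=r'$. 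Parseval applied to $\chi(\BBB(y_1,r),\cdot)\in L^2(Q)$ gives $v(r) = \sum_{l\ge 0} h_l d_l(r)^2$ (using $\chi^2=\chi$), so subtracting and inserting into \eqref{eq1.14*} yields
\begin{equation*}
\theta^{\Delta}(\xi,y_1,y_2) = \sum\nolimits_{l\ge 1}\bigl(1-\phi_l(y_1,y_2)\bigr)\int_0^\pi h_l\,d_l(r)^2\,\dd\xi(r)
\end{equation*}
(the $l=0$ term drops because $\phi_0\equiv 1$), and the lemma reduces to an explicit evaluation of $h_l d_l(r)^2$.

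The core computation is a closed form for $d_l(r)$. The substitution $t=\cos r'$ together with \eqref{eq0.1} converts $v'(r')\,\dd r'$ into $B(d/2,d_0/2)^{-1}\,2^{-\alpha-\beta-1}(1-t)^\alpha(1+t)^\beta\,\dd t$, so that by \eqref{eq4.1}
\begin{equation*}
d_l(r) = \frac{B(d/2,d_0/2)^{-1}\,2^{-\alpha-\beta-1}}{P_l^{(\alpha,\beta)}(1)}\int_{\cos r}^1 P_l^{(\alpha,\beta)}(t)(1-t)^\alpha(1+t)^\beta\,\dd t.
\end{equation*}
Rodrigues' formula \eqref{eq9.11} expresses the integrand as $(-1)^l 2^{-l}(l!)^{-1}(\dd/\dd t)^l\{(1-t)^{l+\alpha}(1+t)^{l+\beta}\}$; one antidifferentiation, combined with Rodrigues applied to $P_{l-1}^{(\alpha+1,\beta+1)}$, identifies
\begin{equation*}
\frac{\dd^{l-1}}{\dd t^{l-1}}\bigl\{(1-t)^{l+\alpha}(1+t)^{l+\beta}\bigr\} = (-1)^{l-1}\,2^{l-1}(l-1)!\,(1-t)^{\alpha+1}(1+t)^{\beta+1}P_{l-1}^{(\alpha+1,\beta+1)}(t).
\end{equation*}
The boundary contribution at $t=1$ vanishes from the $(1-t)^{\alpha+1}$ factor, and converting $(1\mp t)^{\alpha+1}$, $(1\pm t)^{\beta+1}$ back to half-angle form yields
\begin{equation*}
d_l(r) = \frac{B(d/2,d_0/2)^{-1}}{l\,P_l^{(\alpha,\beta)}(1)}\,(\sin\tfrac{1}{2} r)^d(\cos\tfrac{1}{2} r)^{d_0}\,P_{l-1}^{(\alpha+1,\beta+1)}(\cos r).
\end{equation*}
The normalization $h_l$ is then determined by applying \eqref{eq8.25} to $\int_Q \phi_l(y_1,y)^2\,\dd\mu(y) = h_l^{-1}$ under the same change of variable, giving $h_l = B(d/2,d_0/2)\,P_l^{(\alpha,\beta)}(1)^2\,M_l$. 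The factors of $P_l^{(\alpha,\beta)}(1)$ cancel in $h_l d_l(r)^2$, which equals the integrand of $A_l(\xi)$ multiplied by $B(d/2,d_0/2)^{-1}l^{-2}M_l$, and integrating against $\xi$ produces \eqref{eq9.16}.

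For the convergence claim I would argue by positivity. The bound $|\phi_l|\le 1$ from \eqref{eq8.23} makes every term $h_l d_l(r)^2(1-\phi_l(y_1,y_2))$ non-negative, and the Parseval identity used above gives $\sum_{l\ge 1} h_l d_l(r)^2 = v(r) - v(r)^2$ pointwise; integrating against the finite measure $\xi$ and invoking monotone convergence on nonnegative integrands yields $B(d/2,d_0/2)^{-1}\sum_{l\ge 1} l^{-2}M_l A_l(\xi) = \langle \theta^{\Delta}(\xi)\rangle < \infty$ by \eqref{eq1.19}. Combined with $|1-\phi_l(y_1,y_2)|\le 2$ uniformly, this dominates the series \eqref{eq9.16} by twice a convergent sum and gives absolute and uniform convergence in $(y_1,y_2)$. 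The main obstacle is the explicit evaluation of $d_l(r)$: one must recognize that a single antidifferentiation of the degree-$l$ Rodrigues expression produces exactly the Rodrigues expression for $P_{l-1}^{(\alpha+1,\beta+1)}$ with its shifted weight $(1-t)^{\alpha+1}(1+t)^{\beta+1}$, so that the factors $(\sin\tfrac{1}{2} r)^{2d}(\cos\tfrac{1}{2} r)^{2d_0}$ emerge naturally, and then track the constants $B(d/2,d_0/2)$, $M_l$, and $P_l^{(\alpha,\beta)}(1)$ carefully so that they reassemble into the form stated in \eqref{eq9.16}.
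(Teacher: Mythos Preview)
Your argument is correct and follows precisely the approach the paper indicates: the paper does not prove Lemma~\ref{lem4.2} in full but cites \cite[Theorem~4.1(ii)]{31} and notes that the key idea is to view $\mu(\BBB(y_1,r)\cap\BBB(y_2,r))$ as a convolution of ball indicators on the homogeneous space $Q(d,d_0)$, which is exactly what your addition-formula expansion accomplishes. Your explicit evaluation of $d_l(r)$ via one antidifferentiation of Rodrigues' formula, your identification $h_l=B(d/2,d_0/2)\,P_l^{(\alpha,\beta)}(1)^2\,M_l$, and your convergence argument via the Parseval identity $\sum_{l\ge 1}h_l d_l(r)^2=v(r)-v(r)^2$ together with \eqref{eq1.19} are all sound and reproduce the details behind the cited result.
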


The expansion \eqref{eq9.16} has been established in \cite[Theorem~4.1(ii)]{31}.
The proof is based on the observation 
that the term $\mu (\BBB(y_1,r)\cap \BBB(y_2,r))$ in the formula \eqref{eq1.14*} can be thought of as a convolution of the characteristic functions of the balls on the homogeneous space $Q(d,d_0)$. 

\begin{proof}[Proof of Theorem~1.2.] 
Substituting \eqref{eq9.16**} and \eqref{eq9.16} into \eqref{eq2.8} and 
equating coefficients at each $\phi_l$, we obtain the following series of equations  
\begin{align}
\gamma (Q) A_l(\xi^{\natural}) = \frac{(1/2)_{l-1} l^2}{2\Gamma(l+1)}\,
\begin{pmatrix} \alpha +l \\ l  \end{pmatrix}\,
B(d/2, d_0/2)\, B((d+1)/2, l+d_0/2)\, ,
\label{eq0.8}
\end{align}
for all $l\ge 1$. If the measure $\dd\xi^{\natural} (r)=\sin (r)\dd r$,
the integral \eqref{eq9.16*} takes the form
\begin{equation}
A_l(\xi^{\natural})=2\int^{\pi}_{0}  
(\sin\frac 12 r)^{2d+1}(\cos \frac12 r)^{2d_0+1}
\left\{ P^{(\alpha+1,\beta+1)}_{l-1} (\cos r)\right\}^2 
\, \dd r \, . 
\label{eq0.20}
\end{equation}
Each of the equations \eqref{eq0.8} can be used to determine the constant $\gamma(Q)$.
In the simplest case of $l=1$, we have
\begin{equation}
\gamma (Q) A_1(\xi^{\natural}) = \frac{d}{4}\, B(d/2, d_0/2)\,
B((d+1)/2, 1+d_0/2)\, ,
\label{eq0.21}
\end{equation}
where
\begin{equation}
A_1(\xi^{\natural})=2\int^{\pi}_{0}  
(\sin\frac 12 r)^{2d+1}(\cos \frac12 r)^{2d_0+1}\, \dd r = 2 B(d+1, d_0 +1)\, , 
\label{eq0.22}
\end{equation}
see \eqref{eq0.1}. Therefore,
\begin{align}
\gamma (Q) = \frac{d\,B(d/2, d_0/2)\,B((d+1)/2, 1+d_0/2)}{8B(d+1, d_0 +1)}\, .
\label{eq0.22*}
\end{align}
In the terms of gamma functions, we have
\begin{align}
\gamma (Q)=\frac{\Gamma(d/2)\,\Gamma(d_0/2)^2\,\Gamma((d+1)/2)\,\Gamma(d+d_0 +2)}
{16\,\Gamma(d)\,\Gamma(d_0)\,\Gamma((d+d_0)/2)\,\Gamma((d+d_0 +3)/2)} \, , 
\label{eq0.22**}
\end{align} 
where the relation $\Gamma(z+1)=z\Gamma(z)$ has been used. Applying 
the duplication formula
$
\Gamma (2z)=\pi^{-1/2}2^{2z-1} \Gamma (z)\Gamma(z+1/2) 
\label{eq9.8}
$
to the terms $\Gamma(d)\, ,\Gamma(d_0)$ and $\Gamma(d+d_0 +2)$,
we obtain
\begin{align}
\gamma(Q)&=\frac{\sqrt{\pi}}{4}\,(d+d_0)\,\frac{\Gamma(d_0/2)}{\Gamma((d_0+1)/2)} \, .
\label{eq4.33**}
\end{align}
This completes the proof.
\end{proof}

\emph{Remark~4.2}. The equalities \eqref{eq0.8} with the constant 
\eqref{eq4.33**} define explicit formulas for the integrals \eqref{eq0.20}. We have
\begin{align}
\int^1_{-1}
\left(P^{(d/2, d_0/2)}_{l-1}(t)\right)^2 \,& \left(\frac{1-t}{2}\right)^{d}
\left(\frac{1+t}{2}\right)^{d_0}\, \dd t 
= \frac{2\, (1/2)_{l-1}}{(l-1)!}\,B(d+1, d_0 +1)\times
\notag
\\
\notag
\\
&\times \frac{\Gamma(d/2+l)\,\Gamma(d_0/2+l)\,\Gamma(d/2+d_0/2+3/2))}
{\Gamma(d/2+1)\,\Gamma(d_0/2+1)\,\Gamma(d/2+d_0/2+1+l)} 
\label{eq0.81}
\end{align}
for all $l\ge 1$.

It is worth noting that a direct proof of the formulas
\eqref{eq0.81} makes possible an alternative proof of Theorems~1.1 and 1.2.
Indeed, the formulas \eqref{eq0.81} together with
the expansions \eqref{eq9.16**} and \eqref{eq9.16}  imply the relations
\eqref{eq2.8} and \eqref{eq1.33*}. Such an approach to the proof of
invariance principle will be realized in the second part of this work.


\end{document}